\documentclass[11pt,a4paper,reqno]{amsart}

\usepackage[T1]{fontenc}
\usepackage[utf8]{inputenc}
\usepackage[english]{babel}
\usepackage[margin=26mm]{geometry}
\usepackage{amsmath,amssymb,amsthm,mathtools}
\usepackage{microtype}
\usepackage{enumitem}
\usepackage[hidelinks,bookmarksopen=true,bookmarksnumbered=true]{hyperref}

\allowdisplaybreaks

\setlist[itemize]{leftmargin=*,topsep=0.4em,itemsep=0.2em}
\setlist[enumerate]{leftmargin=*,topsep=0.4em,itemsep=0.25em}

\newtheorem{theorem}{Theorem}[section]
\newtheorem{proposition}[theorem]{Proposition}
\newtheorem{lemma}[theorem]{Lemma}
\newtheorem{corollary}[theorem]{Corollary}
\theoremstyle{remark}
\newtheorem{remark}[theorem]{Remark}

\newcommand{\R}{\mathbb R}
\newcommand{\Ball}{\operatorname{B}}
\newcommand{\Area}{\operatorname{Area}}
\newcommand{\Vol}{\operatorname{Vol}}

\title{The rhombic dodecahedral conjecture}

\author{Annalisa Cesaroni}
\address{Dipartimento di Matematica ``Tullio Levi-Civita'', Universit\`a di Padova, Via Trieste 63, 35131 Padova, Italy}
\email{annalisa.cesaroni@unipd.it}
\author{Matteo Novaga}
\address{Dipartimento di Matematica, Universit\`a di Pisa, Largo Bruno Pontecorvo 5, 56127 Pisa, Italy}
\email{matteo.novaga@unipi.it}

\subjclass[2020]{52C07, 52A40, 52B20, 11H06}
\keywords{parallelohedra, Voronoi cell, zonotope, Selling parameters, Hornich--Hlawka inequality}

\date{\today}

\begin{document}

\begin{abstract}
We prove that every three-dimensional parallelohedron $P$ satisfies
\[
  \mathcal H^2(\partial P)\ge 12\sqrt2\,r(P)^2,
\]
where $r(P)$ is the inradius of $P$, with equality if and only if $P$ is similar to the regular rhombic dodecahedron.
\end{abstract}

\maketitle
\tableofcontents

\section{Introduction}\label{sec:introduction}

A three-dimensional parallelohedron is a convex polytope whose translates tile $\R^3$.  By the classical results of Venkov~\cite{Venkov1954} and McMullen~\cite{McMullen1980}, every such polytope admits a face-to-face lattice tiling.  In dimension three there are five Fedorov types: parallelepipeds, hexagonal prisms, rhombic dodecahedra, elongated rhombic dodecahedra, and truncated octahedra; see, for instance, L\'angi~\cite[Section~2.1]{Langi2022}.

In this paper we consider the minimal surface-area problem at fixed inradius.  
L\'angi states the expected sharp inequality as Conjecture~2 in~\cite{Langi2022}, under the name \emph{Rhombic Dodecahedral Conjecture}, and attributes it to Bezdek.  Related sharp results for rhombic dodecahedra and for Voronoi cells of lattice sphere packings were obtained in~\cite{Bezdek2000,BezdekDaroczyLiu1999}.

For a convex body $P\subset\R^3$ we shall write
\[
  \Vol(P):=|P|,\qquad
  \Area(P):=\mathcal H^2(\partial P),\qquad
  r(P):=\sup\{r>0:\exists x\in\R^3,\ \Ball(x,r)\subset P\}.
\]

Our main result is the proof of the Rhombic Dodecahedral Conjecture:

\begin{theorem}[Main result]\label{thm:main}
Let $P$ be a three-dimensional parallelohedron.  Then
\begin{equation}\label{eq:mainhom}
  \Area(P)\ge 12\sqrt2\,\,r(P)^2.
\end{equation}
Equality holds if and only if $P$ is similar to the regular rhombic dodecahedron.  In particular, every parallelohedron containing a unit ball has surface area at least $12\sqrt2$.
\end{theorem}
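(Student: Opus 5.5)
We reduce the inequality to a finite-dimensional problem on the parameters of zonotopal parallelohedra, and then prove a sharp algebraic inequality there.

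\emph{Step 1: reduction to zonotopes.} In dimension three every parallelohedron is a zonotope, $P=\sum_{i}[-v_i,v_i]$. It is generated by at most six segments, and these can be arranged as the Selling/Voronoi configuration of a lattice: $P$ is affinely the Voronoi cell of a lattice with Selling parameters $p_{ij}\ge0$, $0\le i<j\le 3$. Since the statement is invariant under similarities but not under affine maps, we cannot assume $P$ is a Voronoi cell. Instead we keep the full zonotope description and use two formulas. The area of a zonotope is
\[
\Area(P)=\sum_{i<j}4|v_i\times v_j|\cdot(\text{multiplicities}),
\]
more precisely $\Area(P)=2\sum_{i<j}4|v_i\times v_j|$ over pairs generating the parallelogram faces. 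The inradius of a centrally symmetric body is half its minimal width,
\[
r(P)=\min_{|u|=1}\sum_i|\langle u,v_i\rangle|.
\]
For a zonotope this minimum is attained at a face normal, that is, at $u$ parallel to some $v_i\times v_j$.

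\emph{Step 2: the key inequality.} We fix the face normal $u_{ij}=(v_i\times v_j)/|v_i\times v_j|$. For each pair the inradius bound gives
\[
r\,|v_i\times v_j|\le \sum_k|\det(v_i,v_j,v_k)|.
\]
Summing these inequalities weighted by $|v_i\times v_j|$ relates $r\cdot\Area$ to a sum of triple determinants. Those determinants are controlled by $\Vol(P)=8\sum_{i<j<k}|\det(v_i,v_j,v_k)|$. This yields a Minkowski-type bound $\Area\cdot r\ge 3\Vol$, which is always true, though only $3\Vol\ge$ something is then needed. Since $\Vol$ is not controlled below by $r^2$ alone, this route is insufficient. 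Instead we aim for a direct bound of the form
\[
\Big(\sum_{i<j}|v_i\times v_j|\Big)\ \ge\ c\,\min_{u}\Big(\sum_k|\langle u,v_k\rangle|\Big)^2,
\]
with $c=3\sqrt2/2$, normalized so that equality holds for the six vectors $e_a\pm e_b$ scaled suitably.

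The plan is to bound $|v_i\times v_j|$ below by the width in the direction $u_{ij}$ minus contributions, using a Hornich--Hlawka type inequality $|a|+|b|+|c|+|a+b+c|\ge|a+b|+|b+c|+|c+a|$. Applied to cross products of generators, this allows a symmetrization argument: we replace the six generators by their averages under the combinatorial symmetry of the Selling configuration. The symmetrization does not increase area relative to $r^2$. It reduces to the case where all Selling parameters are equal or some vanish, that is, the degenerate Fedorov types.

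\emph{Step 3: case analysis and equality.} We treat the degenerate types (parallelepiped, hexagonal prism, rhombic dodecahedron, elongated dodecahedron) as boundary cases of the truncated octahedron family by letting generators vanish. On each stratum we minimize $\Area/r^2$ explicitly. For the parallelepiped the bound reduces to $\Area\ge 24r^2>12\sqrt2 r^2$ by a standard argument. For the general case a Lagrange-multiplier computation on the symmetric configuration gives the value $12\sqrt2$, attained only at the regular rhombic dodecahedron. Equality tracking through the Hornich--Hlawka step, which is strict unless the vectors are suitably aligned, then gives uniqueness up to similarity.

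The main obstacle is Step 2. The inradius is a minimum over directions, so it is nonsmooth, and the area is not a convex function of the generators. Getting a symmetrization that controls both simultaneously is delicate. I would first try to prove that the active directions for $r$ at the minimizer are the twelve rhombic face normals. I would then reduce to averaging the width constraints over those normals, which turns the min into a linear combination that is amenable to the Hornich--Hlawka inequality.
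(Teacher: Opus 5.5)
Your proposal has a genuine gap exactly where you locate the main obstacle. Step~2 contains no argument on the truncated-octahedral stratum, and the mechanism you suggest, symmetrization, points the wrong way.

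Suppose ``averaging under the combinatorial symmetry of the Selling configuration'' means averaging the Selling parameters over the $S_4$ relabelling. Then the extremal body itself (FCC: one opposite pair of parameters zero, the other four equal) is sent to the configuration with all six parameters equal. That is the BCC cell, the regular truncated octahedron, for which $\Area/r^2=4+8\sqrt3\approx17.86>12\sqrt2$. So symmetrization does increase area relative to $r^2$, already at the equality case.

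The structural reason is what the paper exploits. Fix the affine metric $Q$ with $\det Q=1$ and normalize the inradius. The tangency conditions then become linear, $c_i(\omega)\ge w_i$. The area $2N(\omega,Q)$ is a quadratic form in the weights that is strictly \emph{concave} along every direction preserving five of the seven cut values (Lemma~\ref{lem:negative}, via Hornich--Hlawka and strict triangle inequalities). Hence, by Proposition~\ref{prop:descent}, minimizers are pushed to the boundary of the feasible region, never toward symmetric averages: either some weight vanishes (a non-truncated type) or exactly six tangencies are active. A Lagrange-multiplier computation at a symmetric point therefore cannot locate the minimum. Note also that the six vectors $e_a\pm e_b$ generate the regular truncated octahedron, not the equality case; the rhombic dodecahedron is a four-generator zonotope.

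The second missing ingredient is the estimate at the six-contact point. Here the Minkowski-type route you discarded is in fact the right one, once the inequality is put the right way round: it is $r\Area\le3\Vol$, not $\ge$. In the normalized coordinates it becomes the exact identity $N=\sum_iq_iw_i=3D(\omega)-\sum_iq_i\delta_i$ with slacks $\delta_i=c_i-w_i\ge0$ (from \eqref{eq:EulerIdentity}), while Gauss's bound reads $D(\omega)\ge1/\sqrt2$.

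At a six-contact point only one slack survives, $\delta_4=h$ on a $2+2$ cut (Lemma~\ref{lem:slackdouble}). The paper then proves the quantitative strengthening $D-1/\sqrt2>\frac13q_4\delta_4$, which gives $N>3/\sqrt2$:
\begin{itemize}
\item the six active lengths are the $Q$-distances among the four points $0,z_0,z_2,z_6$, so $\det Q=1$ becomes $\det\Gamma(\ell^{\circ2})=1$ while $\det\Gamma(\ell)=D$;
\item one then has $2D^2-1\ge\frac{16}{3(b+c+d+e)}D\,af(be+cd+af)$ (Proposition~\ref{prop:quantremainder});
\item this is compared with $\delta_4\le4(be+cd+af)/(b+c+d+e)$.
\end{itemize}
Nothing in your plan plays this role.

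Finally, Step~3 is carried out only for parallelepipeds. The paper does not minimize on the other non-truncated strata by hand. It combines the sharp fixed-volume bound $\Area\ge3\,2^{5/6}\Vol^{2/3}$ for non-truncated types from the authors' earlier work with $\Vol\ge4\sqrt2\,r^3$ (Proposition~\ref{prop:boundaryRDC}); you would need that result or an equivalent.
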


In~\cite{CesaroniNovaga2025} the authors proved existence and regularity results for minimizers of the minimal surface-area problem at fixed inradius among periodic fundamental domains, and in particular showed that a minimizer cannot be a parallelohedron.  The present result concerns the same minimization problem in the constrained class of convex translative tiles, which coincides with the class of parallelohedra.

Other related results have appeared recently. 
L\'angi~\cite[Theorem~1]{Langi2022} proved that the regular truncated octahedron minimizes mean width among unit-volume parallelohedra. 
Recently, Hales and Song~\cite[Theorem~1.1]{HalesSong2026}
and, independently, the authors~\cite[Theorem~1.1]{CesaroniNovaga2026} proved that the regular truncated octahedron uniquely minimizes surface area among parallelohedra with fixed volume.  The latter work also proves the stronger sharp bound on the non-truncated Fedorov strata, with the rhombic dodecahedron as unique minimizer.  More precisely, every three-dimensional parallelohedron satisfies  
\begin{equation}\label{sharpbcc} \Area(P) \ge \frac{3(1+2\sqrt{3})}{4^{2/3}}\Vol(P)^{2/3},
\end{equation} with equality if and only if $P$ is similar to the truncated octahedron;  
moreover, if $P$ is  not of truncated-octahedral Fedorov type, then
\begin{equation}\label{sharpfcc} \Area(P) \ge 3\,2^{5/6}\Vol(P)^{2/3},
\end{equation}  
with equality if and only if $P$ is similar to the rhombic dodecahedron. 

The fixed-volume theorem does not by itself imply Theorem~\ref{thm:main}, at least for truncated-octahedral Fedorov type parallelohedra.  Indeed, by the Venkov--McMullen theorem the translates of $P$ may be taken to form a lattice tiling.  Since a parallelohedron is centrally symmetric, convexity allows an inball of radius $r(P)$ to be centered at its symmetry center; its lattice translates therefore form a lattice sphere packing.  Gauss's theorem on three-dimensional lattice sphere packings~\cite{Gauss1831} gives the optimal lattice-packing density $\pi/\sqrt{18}=\pi/(3\sqrt2)$.  Hence
\begin{equation}\label{eq:GaussVolume}
  \Vol(P)\ge 4\sqrt2\,r(P)^3.
\end{equation}
Combining estimate \eqref{eq:GaussVolume} with the sharp BCC isoperimetric constant \eqref{sharpbcc} yields only
\[
  \Area(P)\ge 3\,2^{1/3}(1+2\sqrt3)\,r(P)^2
  \approx 16.873247\,r(P)^2,
\]
whereas
\[
  12\sqrt2\,r(P)^2\approx16.970563\,r(P)^2,
\]
so the fixed-volume argument misses the conjectured constant by about $0.57\%$.  This is not the case for the non-truncated Fedorov types, for which we may use the sharp isoperimetric bound \eqref{sharpfcc} in place of \eqref{sharpbcc}, and conclude by Gauss's lattice-packing bound (see Section~\ref{sec:boundary}).

Therefore, the only remaining issue is the proof of the conjecture for the truncated-octahedral Fedorov stratum, where we need an appropriate estimate taking into account the distances of the facets from the center.

Finally, we point out that the Rhombic Dodecahedral Conjecture we prove in Theorem~\ref{thm:main} should be distinguished from the classical Dodecahedral Conjecture and its strong form.  Hales--McLaughlin in~\cite{HalesMcLaughlin2010} proved that every Voronoi cell in a packing of unit balls has volume at least that of the regular dodecahedron of inradius $1$.  Bezdek's Strong Dodecahedral Conjecture replaces volume by surface area; Hales proved that every bounded Voronoi cell in such a packing has surface area at least that of the regular dodecahedron of inradius $1$, approximately
$16.6508$, see~\cite{BezdekRogersLemma2000,HalesStrong2013}.  This is strictly below
$12\sqrt2\approx16.970563$.
Generic Voronoi cells in a sphere packing do form a tiling, but they need not be translates of one common cell.  The larger constant in Theorem~\ref{thm:main} therefore is due to the additional parallelohedral structure of the problem, rather than tiling alone.

\smallskip
\noindent{\bf Plan of the paper.}
Section~\ref{sec:boundary} contains the result for the four non-truncated Fedorov types.  
Section~\ref{sec:K4} introduces the graphical representation $K_4$ and derives the surface-area and inradius formulas.  In particular, it gives an equivalent formulation of the ratio $\Area(P)/r(P)^2$ for truncated-octahedral parallelohedra.   Section~\ref{sec:descent} contains the proof of the key negative-descent property at fixed affine metric, see Proposition~\ref{prop:descent}.  Section~\ref{sec:contacts} analyzes the six-contact configuration, introduces the four-point metric, and provides a quantitative refinement of the four-point determinant remainder \eqref{eq:quantremaindernew}. Section~\ref{sec:deficit} derives from it  the determinant--tangency deficit estimate. Section~\ref{sec:mainproof} completes the proof of Theorem~\ref{thm:main}.

\smallskip
\noindent{\bf AI statement.} ChatGPT (GPT-5.6 Sol) was used during the preparation of this work to check algebraic identities and to assist in exploring proof arguments.  The authors checked the final statements and proofs and take responsibility for the content of the manuscript.

\section{The non-truncated strata}
\label{sec:boundary}
We first prove Theorem~\ref{thm:main} on the non-truncated Fedorov strata. This result is a direct consequence of the sharp isoperimetric estimate \eqref{sharpfcc} and of Gauss's lattice-packing bound \eqref{eq:GaussVolume}.

\begin{proposition}\label{prop:boundaryRDC}
If $P$ is a three-dimensional parallelohedron that is not of truncated-octahedral Fedorov type, then
\[
  \Area(P)\ge12\sqrt2\,r(P)^2,
\]
with equality if and only if $P$ is similar to the regular rhombic dodecahedron.
\end{proposition}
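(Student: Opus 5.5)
We combine the sharp fixed-volume bound \eqref{sharpfcc} on the non-truncated strata with Gauss's lattice-packing estimate \eqref{eq:GaussVolume}. Let $P$ be a parallelohedron not of truncated-octahedral Fedorov type. By \eqref{sharpfcc},
\[
  \Area(P)\ge 3\,2^{5/6}\Vol(P)^{2/3}.
\]
By the Venkov--McMullen theorem, $P$ tiles by lattice translates. Since $P$ is centrally symmetric and convex, an inball of radius $r(P)$ may be centered at the symmetry center: if $\Ball(x,r)\subset P$, then also $\Ball(-x,r)\subset P$, and by convexity $\Ball(0,r)\subset P$. The lattice translates of this ball have disjoint interiors, so they form a lattice packing of density $\frac{4}{3}\pi r(P)^3/\Vol(P)$. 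Gauss's bound $\pi/(3\sqrt2)$ then gives $\Vol(P)\ge 4\sqrt2\,r(P)^3$, which is \eqref{eq:GaussVolume}. Hence
\[
  \Area(P)\ge 3\,2^{5/6}\,(4\sqrt2)^{2/3}\,r(P)^2 = 3\,2^{5/6}\,2^{5/3}\,r(P)^2=3\cdot 2^{5/2}\,r(P)^2=12\sqrt2\,r(P)^2 .
\]

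For the equality case, first check sufficiency. The regular rhombic dodecahedron with inradius $1$ is the Voronoi cell of the FCC lattice with minimal vectors of length $2$. Its twelve faces are rhombi with diagonals $2$ and $\sqrt2$, so each has area $\sqrt2$ and the total is $12\sqrt2$. Equivalently, it attains equality both in \eqref{sharpfcc} and in Gauss's bound.

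Conversely, suppose equality holds. Then both inequalities in the chain must be equalities. Equality in \eqref{sharpfcc} forces $P$ to be similar to the rhombic dodecahedron, which already concludes. One can note as a consistency check that equality in the Gauss step forces the inball packing to be the FCC packing; this is the uniqueness part of Gauss's theorem, which is not actually needed here.

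The only potential obstacle is making sure that \eqref{sharpfcc}, as quoted from \cite{CesaroniNovaga2026}, applies to every non-truncated stratum. These strata are parallelepipeds, hexagonal prisms, and the rhombic and elongated rhombic dodecahedra, including their degenerate members. The estimate is stated for all parallelohedra not of truncated-octahedral type, so we take it as given. The remaining work is the arithmetic $2^{5/6}\cdot2^{5/3}=2^{5/2}$ and the observation that the inball can be centered at the symmetry center.
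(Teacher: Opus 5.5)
Your proposal is correct and follows the paper's proof essentially verbatim: you chain the sharp non-truncated isoperimetric bound \eqref{sharpfcc} from \cite{CesaroniNovaga2026} with Gauss's lattice-packing bound \eqref{eq:GaussVolume}, using the inball centered at the symmetry center, verify $3\,2^{5/6}(4\sqrt2)^{2/3}=12\sqrt2$, and obtain the equality case from the equality case of \eqref{sharpfcc}. Your direct area computation for the regular rhombic dodecahedron (twelve rhombi with diagonals $2$ and $\sqrt2$ at inradius $1$) is a harmless, slightly more explicit replacement for the paper's remark that the FCC Voronoi cell attains equality in both steps.
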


\begin{proof}
By \cite[Theorem~1.1]{CesaroniNovaga2026}, every non-truncated Fedorov type satisfies the sharp estimate \eqref{sharpfcc}, with equality only for the regular rhombic dodecahedron, up to similarities.  Since every parallelohedron admits a lattice tiling~\cite{Venkov1954,McMullen1980} and is centrally symmetric, an inball of radius $r=r(P)$ may be centered at its symmetry center; its lattice translates give a lattice packing by balls of radius $r(P)$.  Gauss's lattice-packing theorem~\cite{Gauss1831} gives the optimal three-dimensional lattice-packing density $\pi/\sqrt{18}=\pi/(3\sqrt2)$, attained by the FCC lattice. Hence
\[
  \Vol(P)\ge
  \frac{(4\pi/3)r^3(P)}{\pi/(3\sqrt2)}=4\sqrt2\,r^3(P).
\]
Therefore, using \eqref{sharpfcc}, we get 
\begin{align*}
  \Area(P)
  &\ge 3\,2^{5/6}\Vol(P)^{2/3}\ge 3\,2^{5/6}(4\sqrt2)^{2/3}r^2(P)
   =12\sqrt2\,r^2(P).
\end{align*}
If equality holds, then equality also holds in~\eqref{sharpfcc}; hence $P$ is similar to the regular rhombic dodecahedron.  Conversely, the regular rhombic dodecahedron is the Voronoi cell of the FCC lattice, so its centered inballs realize equality in Gauss's lattice-packing bound, while equality in~\eqref{sharpfcc} holds by \cite[Theorem~1.1]{CesaroniNovaga2026}.  Thus it realizes equality in the displayed estimate.
\end{proof}

\section{Graphical representation of parallelohedra}
\label{sec:K4}
Following \cite[Sections~3 and~6]{CesaroniNovaga2026}, we use a graphical representation of parallelohedra to derive explicit formulas for the area and, in the truncated-octahedral case, the inradius.  We then reformulate the ratio $\Area(P)/r(P)^2$ in these coordinates.

We first recall the Selling--Voronoi coordinates used in \cite[Section~3]{CesaroniNovaga2026}.  Every full-rank lattice in $\R^3$ can be associated with a nonnegative edge-weighted realization of $K_4$, the complete graph with four vertices $\{0,1,2,3\}$.  
We label the vertices by the four vectors $v_0,v_1,v_2,v_3$ of the obtuse superbase and identify the Selling parameter $-v_i\cdot v_j$ with the weight of the edge $ij$ of $K_4$.    

We use the edge order
$(01,02,03,12,13,23)$, and write the six nonnegative edge weights as
$\omega=(a,b,c,d,e,f)$,
that is $a=-v_0\cdot v_1\geq 0$, $b=-v_0\cdot v_2\geq 0$ and so on.

To realize the abstract graph $K_4$ in a fixed three-dimensional root space, introduce auxiliary points
$p_0=0$, $p_1=\mathbf e_1$, $p_2=\mathbf e_2$, $p_3=\mathbf e_3$.  These $p_i$ are not the vectors of the obtuse superbase.  We take the incidence roots
\begin{equation}\label{eq:roots}
\begin{aligned}
 r_a&=\mathbf e_1, & r_b&=\mathbf e_2, & r_c&=\mathbf e_3,\\
 r_d&=\mathbf e_1-\mathbf e_2, & r_e&=\mathbf e_1-\mathbf e_3, & r_f&=\mathbf e_2-\mathbf e_3.
\end{aligned}
\end{equation}
The centered graphical zonotope is
\begin{equation}\label{eq:Zomega}
  Z_\omega:=\sum_{\xi\in\{a,b,c,d,e,f\}} \left[-\frac{\xi r_\xi}{2},\frac{\xi r_\xi}{2}\right].
\end{equation}  
Define the associated symmetric matrix
\begin{equation}\label{eq:Aomega}
  A(\omega):=\sum_{\xi\in\{a,b,c,d,e,f\}} \xi\, r_\xi r_\xi^T.
\end{equation}
For Selling parameters this is the Gram matrix appearing in \cite[Proposition~3.1]{CesaroniNovaga2026}.

We shall use the following representation statement.

\begin{proposition}\label{prop:CNrepresentation}
Every three-dimensional parallelohedron is, up to translation, of the form
\[
  P=LZ_\omega,
\]
where $L\in GL(3,\R)$ and $Z_\omega$ is the centered graphical zonotope of a connected nonnegative weighting $\omega$ of the six edges of $K_4$.  Conversely, for every $L\in GL(3,\R)$ and every connected nonnegative weighting $\omega$, the body $LZ_\omega$ is a three-dimensional parallelohedron.
\end{proposition}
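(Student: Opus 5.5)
The plan is to combine the Venkov--McMullen lattice-tiling theorem with Voronoi's theorem in dimension three, and the Selling reduction of positive definite ternary forms to an obtuse superbase.

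\emph{Step 1: reduce to a Voronoi cell.} Let $P$ be a three-dimensional parallelohedron. By \cite{Venkov1954,McMullen1980}, $P$ is centrally symmetric with centrally symmetric facets, each belt has four or six facets, and the translates of $P$ form a face-to-face lattice tiling with lattice $\Lambda$. Voronoi's conjecture holds in dimension three (Delaunay; more generally it is known for zonotopes and for primitive tilings). Hence there is an affine map $T$ such that $TP$ is the Dirichlet--Voronoi cell of the lattice $T\Lambda$. Up to a translation and a linear map, it therefore suffices to treat Voronoi cells of three-dimensional lattices.

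\emph{Step 2: apply Selling reduction.} Every three-dimensional lattice $\Lambda'$ has an obtuse superbase $v_0,\dots,v_3$ with $\sum v_i=0$ and $v_i\cdot v_j\le 0$ for $i\ne j$. This is Selling's reduction theorem, as used in \cite[Section~3]{CesaroniNovaga2026}. By Conway--Sloane, the Voronoi cell of $\Lambda'$ is then the zonotope
\[
  \sum_{i<j}(-v_i\cdot v_j)\,[-\tfrac12 u_{ij},\tfrac12 u_{ij}],
\]
where the $u_{ij}$ are the dual vectors (cross-product-type vectors) associated with the edges of $K_4$. Choose the linear map $M$ that sends the dual basis to the standard coordinates, so that $u_{0k}\mapsto \mathbf e_k$ and $u_{ij}\mapsto \mathbf e_i-\mathbf e_j$, up to signs. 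This $M$ carries the cell onto $Z_\omega$ with $\omega=(a,\dots,f)$ given by the Selling parameters. Absorbing $M$ and $T$ into $L$ gives $P=LZ_\omega$ up to translation. The weighting $\omega$ is connected because the roots of the positive-weight edges must span $\R^3$, since the Voronoi cell is full-dimensional. For a graphic arrangement, spanning is exactly connectedness of the support graph.

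\emph{Step 3: the converse.} Let $\omega$ be connected and nonnegative. The positive-weight roots then contain a spanning tree, so $Z_\omega$ is full-dimensional. It is a graphical zonotope of a unimodular (totally unimodular, graphic) root system. Any zonotope whose generators form a unimodular system tiles space by translations, with lattice generated by the generators $\xi r_\xi$, in the sense of Shephard/McMullen. A more direct route is to check McMullen's criterion: every rank-two subfamily of the generators has at most three lines, which holds for $K_4$ roots because each 2-flat contains at most a triangle's roots. Finally, $L\in GL(3,\R)$ preserves translative tiling.

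\emph{Main obstacle.} The hardest part is the identification in Step 2: the explicit sign and scale bookkeeping showing that the Conway--Sloane vonorm/zonotope description becomes exactly $Z_\omega$ with the roots \eqref{eq:roots}, including degenerate weights. I would try first to verify it via support functions. I would show that $h_{Z_\omega}(x)=\tfrac12\sum\xi|r_\xi\cdot x|$ agrees, after the linear change, with the Voronoi cell's facet description $\{y:\ y\cdot w\le \tfrac12|w|^2\}$ on the relevant vectors $w$.
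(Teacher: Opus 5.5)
Your forward direction is correct and is essentially the content of \cite[Theorem~6.1]{CesaroniNovaga2026}, which the paper cites rather than reproves. It combines Venkov--McMullen, Voronoi's conjecture in dimension three, Selling reduction, and the identification $\operatorname{Vor}(G\mathbb Z^3)=G^{-T}Z_\omega$ whenever $G^TG=A(\omega)$.

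The support-function check you flag as the main obstacle is short. One has $G^T\operatorname{Vor}(G\mathbb Z^3)=\{z:\ z\cdot n\le\frac12n^TA(\omega)n\ \forall n\in\mathbb Z^3\}$. Since $n^TA(\omega)n=\sum_\xi\xi(r_\xi\cdot n)^2$ with $r_\xi\cdot n\in\mathbb Z$, it follows that $h_{Z_\omega}(n)=\frac12\sum_\xi\xi|r_\xi\cdot n|\le\frac12n^TA(\omega)n$. Equality holds at the cut vectors $z_i$, and these include, up to sign, all facet normals of $Z_\omega$.

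For the converse you take a genuinely different route, invoking the Shephard--McMullen theorem that a zonotope with unimodular generator directions tiles by translations. The paper instead stays inside the Selling--Voronoi framework. Connectedness makes $A(\omega)$ positive definite, so it is the Gram matrix of a lattice $\Lambda$ with an obtuse superbase whose Selling parameters are $\omega$. Then $Z_\omega=A(\omega)^{1/2}\operatorname{Vor}(\Lambda)$ up to an orthogonal map. This is just your Step~2 run backwards, so it avoids the external zonotope theorem and identifies the tiling lattice explicitly. Your route is more general but relies on a heavier citation.

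Two claims in your Step~3 are wrong as written, although neither is needed for the conclusion.

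\emph{The tiling lattice.} It is not the group generated by the generators $\xi r_\xi$. For generic weights these six vectors are $\mathbb Z$-independent, so they generate a non-discrete subgroup of $\R^3$. In these coordinates the lattice is $A(\omega)\mathbb Z^3$, generated by the signed cut sums $A(\omega)z_i=\sum_\xi\xi(r_\xi\cdot z_i)r_\xi$.

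\emph{The ``more direct'' criterion.} It is not McMullen's. Requiring each $2$-flat to contain at most three generator directions (all faces parallelograms or hexagons) is necessary but not sufficient. Five generators in general position in $\R^3$ satisfy it, yet give a zonohedron with $20$ facets whose belts have $8$ facets. The correct condition in $\R^3$ is the belt condition: projecting along any generator leaves at most three directions, i.e.\ each generator lies in at most three $2$-flats. This does hold for $K_4$, since every edge lies in exactly two triangles and one pair of opposite edges, so that route can be repaired.
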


\begin{proof}
The first assertion is \cite[Theorem~6.1]{CesaroniNovaga2026}.  For the converse, let $G_\omega$ be the support graph formed by the edges with positive weight.  Since $G_\omega$ is connected, it contains a spanning tree.  The three incidence roots corresponding to the edges of any spanning tree of $K_4$ are linearly independent.  Hence, for every nonzero $x\in\R^3$,
\[
 x^TA(\omega)x=\sum_\xi \xi\,(x\cdot r_\xi)^2>0,
\]
so $A(\omega)$ is positive definite.  By the converse part of the Selling--Voronoi reduction in \cite[Proposition~3.1]{CesaroniNovaga2026}, the six weights determine a full-rank lattice $\Lambda$ with Gram matrix $A(\omega)$, up to orthogonal isometry.  Then \cite[Proposition~3.2]{CesaroniNovaga2026} gives
\[
  \operatorname{Vor}(\Lambda)=A(\omega)^{-1/2}Z_\omega
\]
up to an orthogonal change of coordinates.  Thus $Z_\omega$ is an invertible linear image of a lattice Voronoi cell and therefore tiles by translations of a lattice.  Applying the invertible map $L$ preserves this property, so $LZ_\omega$ is a three-dimensional parallelohedron.
\end{proof}

Modulo complementation there are seven nontrivial cuts of $K_4$,  four singleton cuts and three $2+2$ cuts that we order as follows:  $\{0\}\mid \{1,2,3\}$, $\{1\}\mid \{0,2,3\}$, $\{2\}\mid \{1,0,3\}$, $\{3\}\mid \{1,2,0\}$,
$\{0,1\}\mid\{2,3\}$, $\{0,2\}\mid\{1,3\}$, and
$\{0,3\}\mid\{1,2\}$.  Their normal vectors are
\begin{equation}\label{eq:z}
\begin{aligned}
 z_0&=(1,1,1)^T, &z_1&=(1,0,0)^T, &z_2&=(0,1,0)^T, &z_3&=(0,0,1)^T,\\
 z_4&=(0,1,1)^T, &z_5&=(1,0,1)^T, &z_6&=(1,1,0)^T.
\end{aligned}
\end{equation}
 For  $S\subset\{0,1,2,3\}$, the weight of the cut $S\mid S^c$ is given by the sum of the weights of the edges   with one endpoint in $S$ and one in $S^c$. 
 In our case the   cut weights are
\begin{equation}\label{eq:ci}
\begin{aligned}
 c_0(\omega)&=a+b+c, &c_1(\omega)&=a+d+e, &c_2(\omega)&=b+d+f, &
 c_3(\omega)&=c+e+f, \\c_4(\omega)&=b+c+d+e, 
 &c_5(\omega)&=a+c+d+f, &c_6(\omega)&=a+b+e+f.&&
\end{aligned}
\end{equation}
 
The seven cut weights satisfy the linear relation
\begin{equation}\label{eq:cutrelation}
  c_0(\omega)+c_1(\omega)+c_2(\omega)+c_3(\omega)=c_4(\omega)+c_5(\omega)+c_6(\omega).
\end{equation}
The facet-area polynomials are
\begin{equation}\label{eq:qi}
\begin{aligned}
 q_0(\omega)&=de+df+ef, &q_1(\omega)&=bc+bf+cf, &q_2(\omega)&=ac+ae+ce, 
& q_3(\omega)&=ab+ad+bd,\\ q_4(\omega)&=af, &q_5(\omega)&=be, &q_6(\omega)&=cd.&&
\end{aligned}
\end{equation}
Finally, the weighted spanning-tree polynomial of $K_4$ coincides with the volume of $Z_\omega$ and is given by 
\begin{align}
\Vol(Z_\omega)=D(\omega)={}&abc+abe+abf+acd+acf+ade+adf+aef\notag\\
&+bcd+bce+bde+bdf+bef+cde+cdf+cef.
\label{eq:D}
\end{align}

The matrix-tree theorem gives the key identity
\begin{equation}\label{eq:EulerIdentity}
  \sum_{i=0}^6q_i(\omega) c_i(\omega)=3D(\omega)=3\Vol(Z_\omega).
\end{equation}

Let $P=LZ_\omega$ be as in Proposition~\ref{prop:CNrepresentation}, where $L\in GL(3,\R)$, and define 
\begin{equation}\label{eq:lambdaQ}
  \lambda:=|\det L|^{1/3},\qquad
  Q:=|\det L|^{2/3}L^{-1}L^{-T}.
\end{equation}
Note that $Q$ is positive definite and $\det Q=1$.  Moreover, the polar decomposition gives
\[
  L=\lambda OQ^{-1/2}
\]
for some orthogonal matrix $O$.  Thus $LZ_\omega$ is similar to $Q^{-1/2}Z_\omega$.  Recalling \eqref{eq:z}, we define
\begin{equation}\label{eq:wiN}
  w_i:=\sqrt{z_i^TQz_i},\qquad
  N(\omega,Q):=\sum_{i=0}^6q_i(\omega)w_i= \sum_{i=0}^6q_i(\omega)\sqrt{z_i^TQz_i}.
\end{equation}
In what follows we write $\|x\|_Q:=\sqrt{x^TQx}$.
\begin{remark}\label{rem:S4}
We note that the group $S_4$
acts on Selling coordinates and on the seven cuts by relabelling the four vertices of the
superbase.   If the incidence roots are written, up to orientation, as differences of the auxiliary points $p_i$, a permutation $\sigma\in S_4$ sends each root to the corresponding relabelled root; with the fixed orientations in~\eqref{eq:roots}, this is one of the six listed roots up to sign.  Thus the induced linear map on the three-dimensional root space is integral and unimodular.  Dually it permutes the seven cut normals up to the corresponding change of basis.  When the metric is represented by a matrix $Q$, the same relabelling is accompanied by the corresponding congruence change of $Q$; positive definiteness and the condition $\det Q=1$ are preserved.  Consequently, statements quantified over all such $Q$ may be reduced by $S_4$ symmetry, with $c_i,q_i,w_i$ and the corresponding constraints relabelled simultaneously.
\end{remark}

We can now compute the volume and area of $P$ explicitly in terms of $L$ and $\omega$.
\begin{lemma}\label{lem:surface}
For $P=LZ_\omega$,
\begin{equation}\label{eq:surface}
  \Area(P)=2\lambda^2N(\omega,Q),\qquad \Vol(P)=\lambda^3\Vol(Z_\omega).
\end{equation}
\end{lemma}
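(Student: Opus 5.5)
Since $L=\lambda OQ^{-1/2}$ with $O$ orthogonal, the body $P=LZ_\omega$ is congruent to $\lambda Q^{-1/2}Z_\omega$. Volume scales as $\lambda^3|\det Q^{-1/2}|=\lambda^3$ because $\det Q=1$, which gives $\Vol(P)=\lambda^3\Vol(Z_\omega)$; by \eqref{eq:D} this equals $\lambda^3D(\omega)$. Area scales by $\lambda^2$, so it suffices to prove
\[
\Area(Q^{-1/2}Z_\omega)=2N(\omega,Q).
\]

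The plan is to decompose the boundary of the zonotope by facet normal directions. By the standard structure of zonotopes, facets of $Z_\omega$ come in antipodal pairs. Each pair corresponds to a plane spanned by roots $r_\xi$, namely a hyperplane $z^\perp$ containing at least two non-parallel generators. For the $K_4$ root system these hyperplanes are exactly the kernels of the seven cut normals $z_i$ in \eqref{eq:z}. Indeed, $z_i\cdot r_\xi=0$ precisely when the edge $\xi$ does not cross the cut, and the seven cuts listed are the only flats of rank two. I will take from \cite[Sections~3 and~6]{CesaroniNovaga2026} that the facet of $Z_\omega$ with outer normal $\pm z_i$ has Euclidean area $q_i(\omega)\cdot|z_i|$, where $|z_i|$ is the Euclidean norm. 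This area is the mixed area of the segments $\xi r_\xi$ lying in $z_i^\perp$.

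I would verify this directly. For a singleton cut, say $z_1^\perp$, the generators are $b,c,f$ with roots $\mathbf e_2,\mathbf e_3,\mathbf e_2-\mathbf e_3$. The planar zonotope they form has area $\sum$ of $|\det|$ over pairs, which is $(bc+bf+cf)$ times the area factor of the lattice plane. For a $2+2$ cut, say $z_4^\perp$, there are two generators $a\mathbf e_1$ and $f(\mathbf e_2-\mathbf e_3)$, and the parallelogram they span has area $af\,|z_4|$. The common pattern is
\[
\text{area}=\sum_{\{\xi,\eta\}\subset z_i^\perp}\xi\eta\,|r_\xi\times r_\eta|,
\]
and in each case $r_\xi\times r_\eta=\pm z_i$, a unimodular fact for graphical roots. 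This gives the area $q_i|z_i|$. Consistency can be checked with the identity \eqref{eq:EulerIdentity}: the support value of $Z_\omega$ in direction $z_i/|z_i|$ is $\tfrac12c_i/|z_i|$, since $\sum_\xi\frac{\xi}{2}|z_i\cdot r_\xi|=\frac12 c_i$. Then the pyramid formula gives $\Vol=\frac13\sum 2\cdot q_i|z_i|\cdot\frac{c_i}{2|z_i|}=\frac13\sum q_ic_i=D$, which matches.

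Next I would apply the linear map $M=Q^{-1/2}$, which has $\det M=1$. For a planar set $F$ with unit normal $n$, the standard cofactor formula gives
\[
\Area(MF)=|\det M|\,|M^{-T}n|\,\Area(F).
\]
With $n=z_i/|z_i|$ this becomes $\|z_i\|_Q\,\Area(F)/|z_i|$, because $M^{-T}=Q^{1/2}$ and $|Q^{1/2}z_i|=\sqrt{z_i^TQz_i}=w_i$. Each pair of facets therefore contributes $2q_iw_i$, and summing over $i$ gives $2N(\omega,Q)$. Degenerate weights, where some $q_i=0$ and the facet disappears, are harmless because the corresponding term vanishes.

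The main obstacle is justifying that the facet of $Z_\omega$ normal to $z_i$ has area exactly $q_i|z_i|$ with no extra constant. Concretely, this means checking $|r_\xi\times r_\eta|=|z_i|$ for every coplanar pair of roots, and confirming that no other facet directions occur. I would settle it by checking one singleton cut and one $2+2$ cut explicitly, then invoking the $S_4$ symmetry of Remark~\ref{rem:S4} for the rest.
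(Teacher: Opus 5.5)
Your proposal is correct and is essentially the paper's argument. The paper takes the facet areas $q_i(\omega)|z_i|$ from the earlier work and applies the area-element factor $|\det L|\,|L^{-T}z_i|/|z_i|=\lambda^2 w_i/|z_i|$ directly to $L$. You instead first factor $L=\lambda OQ^{-1/2}$ and apply the same cofactor formula to $Q^{-1/2}$. Your explicit cross-product check of the facet areas is extra verification the paper omits. If you use the $S_4$ symmetry there, apply it to the vector identity $r_\xi\times r_\eta=\pm z_i$, not to the Euclidean norms $|z_i|$, which are not $S_4$-invariant.
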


\begin{proof}
The proof is contained in~\cite[Proposition~6.2]{CesaroniNovaga2026}; we recall the main calculation. 
Each  facet of $Z_\omega$ with normal $z_i$ has area $q_i(\omega)|z_i|$.  Under $L$, an area element in a plane with normal $z_i$ is multiplied by
\[
  |\det L|\frac{|L^{-T}z_i|}{|z_i|}.
\]
Using  \eqref{eq:lambdaQ},
\[
  |L^{-T}z_i|=\lambda^{-1}\sqrt{z_i^TQz_i}=\lambda^{-1}w_i.
\]
 Hence one facet in the $i$-th opposite pair has area $\lambda^2q_iw_i$.  Summing over the seven opposite pairs gives the first formula, while the second is trivial.  
\end{proof}

We now compute the inradius of truncated-octahedral parallelohedra. 
\begin{lemma}\label{lem:inradius}
Assume that all six graphical weights are positive (i.e.  $a,b,c,d,e,f>0$), so all seven cut directions support facet pairs.  Then the support function $h_{Z_{\omega}}$ of the zonotope $Z_\omega$ in the direction $z_i$ coincides with 
\begin{equation}\label{eq:support}
 h_{Z_\omega}(z_i)=\frac{c_i(\omega)}{2},
\end{equation}
and the inradius of $P=LZ_{\omega}$ is given by 
\begin{equation}\label{eq:inradius}
 r(P)=\frac{\lambda}{2}\min_{0\le i\le6}\frac{c_i(\omega)}{w_i}
     =\frac{\lambda}{2\mu},
 \qquad
 \mu:=\max_{0\le i\le6}\frac{w_i}{c_i(\omega)}.
\end{equation}
\end{lemma}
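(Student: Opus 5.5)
The plan has three parts: compute the support function of $Z_\omega$ directly, pass to $P=LZ_\omega$ by the usual transformation of support functions, and then use central symmetry to reduce the inradius to the minimal facet distance from the center.

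\emph{Step 1: support function of $Z_\omega$.} For a Minkowski sum of centered segments, the support function is additive. Hence
\[
  h_{Z_\omega}(u)=\sum_\xi \frac{\xi}{2}\,|u\cdot r_\xi|.
\]
For $u=z_i$ one checks from \eqref{eq:roots} and \eqref{eq:z} that $z_i\cdot r_\xi\in\{0,\pm1\}$. Moreover, $z_i\cdot r_\xi\neq0$ exactly when the edge $\xi$ crosses the cut $i$. For example, $z_0\cdot r_\xi=1$ for $\xi=a,b,c$ and $0$ for $d,e,f$. Likewise $z_4=(0,1,1)$ gives the values $0,1,1,-1,-1,0$ on $a,\dots,f$. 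This is the standard fact that $p_j\cdot z_S$ is the indicator of $j$ lying on a fixed side of the cut, so $z_S\cdot(p_j-p_k)\in\{0,\pm1\}$. It follows that $h_{Z_\omega}(z_i)=c_i(\omega)/2$, which is \eqref{eq:support}. I would verify all seven cases, either through this indicator description or by a short table.

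\emph{Step 2: facets of $P$ and their distances.} Since all weights are positive, the seven directions $\pm z_i$ are exactly the facet normals of $Z_\omega$. This is the truncated-octahedral case: facet areas $q_i>0$, as recalled in Lemma~\ref{lem:surface}. Because $Z_\omega$ is centered and symmetric, we have
\[
  Z_\omega=\bigcap_i\{|x\cdot z_i|\le c_i/2\}.
\]
Then
\[
  P=LZ_\omega=\bigcap_i\{|y\cdot L^{-T}z_i|\le c_i/2\}.
\]
The distance from the origin to the $i$-th facet plane of $P$ is $\frac{c_i/2}{|L^{-T}z_i|}$. By the computation in the proof of Lemma~\ref{lem:surface}, $|L^{-T}z_i|=\lambda^{-1}w_i$, so this distance equals $\frac{\lambda c_i}{2w_i}$.

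\emph{Step 3: inradius.} The ball $\Ball(0,\rho)$ is contained in the polytope iff $\rho$ is at most every facet distance. Hence the centered inradius is $\frac{\lambda}{2}\min_i c_i/w_i=\lambda/(2\mu)$. It remains to show that an off-center ball cannot do better. This step needs care, but it is routine. If $\Ball(x,\rho)\subset P$, then $\Ball(-x,\rho)\subset P$ by central symmetry. Convexity then gives
\[
  \Ball(0,\rho)=\tfrac12\Ball(x,\rho)+\tfrac12\Ball(-x,\rho)\subset P.
\]
Taking the supremum over $\rho$ yields \eqref{eq:inradius}.

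The only point requiring genuine verification is Step~1, namely the sign and incidence bookkeeping $z_i\cdot r_\xi\in\{0,\pm1\}$ matching the cut weights \eqref{eq:ci}. I expect it to be a finite check with no real obstacle. I would also record that the positivity assumption is used only to guarantee that every one of the seven inequalities defines an actual facet. Without it, the formula would still hold with the minimum restricted to the cuts that carry facets.
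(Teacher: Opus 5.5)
Your proposal is correct and follows the paper's proof essentially step for step. You use additivity of the support function over the segments, with $|z_i\cdot r_\xi|$ as the cut-crossing indicator; the facet distance $\lambda c_i/(2w_i)$ via $|L^{-T}z_i|=\lambda^{-1}w_i$; and the central-symmetry/convexity argument to center the inball. Your only addition is writing out the slab representation $P=\bigcap_i\{|y\cdot L^{-T}z_i|\le c_i/2\}$, which the paper leaves implicit.
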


\begin{proof}
By the definition of the support function, 
\[ 
 h_{Z_\omega}(z_i)=\sum_{\xi\in \{a,b,c,d,e,f\}}  \frac{\xi}{2}|z_i\cdot r_\xi|,
\]
where $\omega=(a,b,c,d,e,f)$.
The incidence-root/cut construction gives $|z_i\cdot r_\xi|=1$ exactly when edge $\xi$ crosses cut $i$, and $0$ otherwise.  This implies  \eqref{eq:support}.

After application of  $L$, the corresponding supporting hyperplane has normal $L^{-T}z_i$ and support value $c_i(\omega)/2$.  The distance from the origin  is therefore
\[
  \frac{c_i(\omega)}{2|L^{-T}z_i|}=\frac{\lambda c_i(\omega)}{2w_i}.
\]
It remains to observe that the largest inscribed ball can be centered at the center of symmetry of $P$.  Indeed, if $P=-P$ contains $\Ball(x,r)$, then it also contains $\Ball(-x,r)$.  If $|y|\le r$, both $x+y$ and $-x+y$ belong to $P$, and convexity gives $y\in P$.  Thus $\Ball(0,r)\subset P$, and the inradius is the minimum of the central facet distances.
\end{proof}

Combining Lemmata~\ref{lem:surface} and~\ref{lem:inradius}, we obtain  
\begin{equation}\label{eq:ratioMuN}
  \frac{\Area(P)}{r(P)^2}=8\mu^2N(\omega,Q).
\end{equation}

Note that the quantity $\mu^2N(\omega, Q)$ is invariant under the rescaling $\omega\mapsto t\omega$, $t>0$, because
$c_i(\omega)\mapsto t c_i(\omega)$, $q_i(\omega)\mapsto t^2q_i(\omega)$, $\mu\mapsto t^{-1}\mu$, and $N(\omega,Q)\mapsto t^2N(\omega,Q)$. 
Therefore, in the six-generator case (i.e.  $a,b,c,d,e,f>0$) we may normalize to 
\begin{equation}\label{eq:constraints}
  \mu=1,\qquad \text{equivalently}\qquad
  c_i(\omega)\ge w_i=\sqrt{z_i^TQz_i}\quad\text{for }0\le i\le6,
\end{equation}
with equality for at least one index.  For the representative $P=Q^{-1/2}Z_\omega$, this normalization gives $r(P)=1/2$ and $\Area(P)=2N(\omega,Q)$, so Theorem~\ref{thm:main} reduces for truncated-octahedral parallelohedra to prove that 
\begin{equation}\label{eq:Ntarget}
  N(\omega, Q)\ge\frac{3}{\sqrt2}.
\end{equation}
\begin{remark}
\label{rem:boundarysupport}
The support identity \eqref{eq:support} remains valid when some graphical weights vanish.  Assume that the support graph is connected, so that $Z_\omega$ is three-dimensional, and let
\[
 F(\omega):=\{i\in\{0,\ldots,6\}:q_i(\omega)>0\}
\]
be the set of actual facet directions.  Indeed, the cut description of the facets in \cite[Proposition~3.3]{CesaroniNovaga2026}, together with the area computation in the proof of Lemma~\ref{lem:surface}, shows that the facet normals of $Z_\omega$ are among the seven cut directions $z_i$, and that the facet pair with normal $z_i$ has area $q_i(\omega)|z_i|$.  Hence it is an actual facet pair precisely when $q_i(\omega)>0$.  Moreover, the centering argument in the proof of Lemma~\ref{lem:inradius} uses only the central symmetry and the convexity of $LZ_\omega$, hence applies verbatim here.  The inradius is then the minimum of the central distances over the directions in $F(\omega)$ only:
\[
 r(LZ_\omega)=\frac{\lambda}{2}\min_{i\in F(\omega)}\frac{c_i(\omega)}{w_i}.
\]
Consequently, if $c_i(\omega)\ge w_i$ for every $i$, then $r(LZ_\omega)\ge\lambda/2$.  Since Lemma~\ref{lem:surface} still gives $\Area(LZ_\omega)=2\lambda^2N(\omega,Q)$, we obtain
\[
 \frac{\Area(LZ_\omega)}{r(LZ_\omega)^2}\le 8N(\omega,Q).
\]
\end{remark}

\section{The negative descent property}
\label{sec:descent}

We now work at fixed affine metric, equivalently at fixed $Q$. 
For a fixed matrix $Q$, the function $N(\omega, Q)$ in \eqref{eq:wiN} is a homogeneous quadratic form in the six graphical weights $a,b,c,d,e,f$, while the constraints \eqref{eq:constraints} are linear. Indeed, writing for simplicity $N(\omega)=N(\omega,Q)$ since $Q$ is fixed, and  using \eqref{eq:qi}, we get
\begin{align}
N(\omega)={}&w_0(de+df+ef)+w_1(bc+bf+cf)\notag\\
&+w_2(ac+ae+ce)+w_3(ab+ad+bd)\notag\\
&+w_4af+w_5be+w_6cd.
\label{eq:Nquadratic}
\end{align}
For $\eta\in\R^6$ we also have 
\begin{equation}\label{eq:quadraticline}
  N(\omega+t\eta)+N(\omega-t\eta)-2N(\omega)=2t^2N(\eta).
\end{equation}

We have the following negative descent property of the homogeneous quadratic form $N$ under the constraint \eqref{eq:constraints}.
\begin{proposition}\label{prop:descent}
Fix $Q>0$ with $\det Q=1$.  Let $\omega=(a,b,c,d,e,f)$ with $a,b,c,d,e,f>0$ satisfy $c_i(\omega)\ge w_i$ for all $i$.  Then there exists a feasible weighting $\omega^*$ with $c_i(\omega^*)\ge w_i$ and 
\[
 N(\omega^*,Q)\le N(\omega,Q)
\]
such that 
either at least one graphical weight of $\omega^*$ is zero or  exactly six of the seven constraints $c_i(\omega^*)\ge w_i$ are equalities.  Moreover, if at most five of the seven constraints are active at $\omega$, then the inequality is strict,
\[
 N(\omega^*,Q)<N(\omega,Q).
\]
\end{proposition}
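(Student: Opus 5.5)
The plan is to exploit the fact that, at fixed $Q$, $N(\cdot)=N(\cdot,Q)$ is a quadratic form on $\R^6$ and the feasible set
\[
\mathcal F:=\{\omega\ge0:\ c_i(\omega)\ge w_i,\ 0\le i\le6\}
\]
is a polyhedron. I will run a descent-and-iterate scheme that enlarges the active set until the stated alternative holds.

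\textbf{Single step.} Let $A(\omega)=\{i: c_i(\omega)=w_i\}$ be the active set. Suppose all weights of $\omega$ are positive and $|A(\omega)|\le5$. I look for a direction $\eta\ne0$ satisfying two conditions:
\begin{itemize}
\item[(i)] $c_i(\eta)=0$ for $i\in A(\omega)$, or at least $c_i(\eta)\ge0$ for those $i$, so that no active constraint is violated;
\item[(ii)] $N(\eta)\le0$.
\end{itemize}
Write $N(\omega+t\eta)=N(\omega)+t\,B(\omega,\eta)+t^2N(\eta)$, where $B$ is the polarization. Replacing $\eta$ by $-\eta$ when (i) is stated with equalities, I may assume $B(\omega,\eta)\le0$. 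Then $t\mapsto N(\omega+t\eta)$ is nonincreasing for $t\ge0$. This is exactly the content of \eqref{eq:quadraticline}: one of the two sides cannot increase, and it decreases strictly if $N(\eta)<0$ or $B(\omega,\eta)\neq0$.

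Since every $c_i$ is a positive combination of weights, $\omega+t\eta$ cannot stay in the open orthant with all constraints inactive for all $t>0$ unless $\eta\ge0$ with $c_i(\eta)=0$ on $A(\omega)$. That case is impossible when some active constraint has a positive-coefficient form, since then $\eta\ge0$ and $c_i(\eta)=0$ force the corresponding weights of $\eta$ to vanish, and one can handle this separately. So I increase $t$ until either a weight hits $0$ or a new constraint becomes active. This strictly enlarges $A$, or produces a zero weight. Because $|A|\le7$, at most seven iterations end in one of the two terminal states.

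When six or more constraints are active I stop. Seven active cannot occur generically, because of the relation \eqref{eq:cutrelation} combined with $\sum_{i\le3}w_i$ versus $\sum_{i\ge4}w_i$. If it does occur, it is a degenerate case in which I would instead drop one constraint and continue, and I need to check that this is compatible with the wording ``exactly six''.

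\textbf{Key step: finding $\eta$ with (i) and (ii).} I first check that the seven linear forms $c_0,\dots,c_6$ on $\R^6$ have rank $6$, with \eqref{eq:cutrelation} their only relation. This is a direct computation from \eqref{eq:ci}. Consequently the subspace $K_A=\{\eta: c_i(\eta)=0,\ i\in A\}$ has dimension $6-|A|\ge1$.

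For the sign of $N$, the structural fact I would use is that $N$ has Lorentzian signature. Indeed $N(\omega)=\tfrac32 V(Z_\omega,Z_\omega,E)$ up to normalization, where $E$ is the ellipsoid with support function $\|\cdot\|_{Q}$. Since $Z_\omega$ is a sum of segments, this is a quadratic form in the segment lengths. By Alexandrov--Fenchel, extended to signed sums of segments (virtual zonotopes), it has exactly one positive eigenvalue. A sanity check confirms the presence of negative directions: $\eta=(1,-1,0,0,0,0)$ gives $N(\eta)=-w_3<0$.

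\begin{itemize}
\item If $|A|\le4$, then $\dim K_A\ge2$, so $K_A$ meets the closed cone $\{N\le0\}$ nontrivially. Generically it contains $\eta$ with $N(\eta)<0$, which gives strict descent.
\item If $|A|=5$, the line $K_A$ may be positive. In that case I relax (i) to inequalities: choose $A'\subset A$ with $|A'|=4$, so that $\dim K_{A'}=2$. Pick $\eta\in K_{A'}$ with $N(\eta)\le0$; the sign of $\eta$ is then free to be chosen so that $c_j(\eta)\ge0$ for the fifth active index $j$. If this forces $c_j(\eta)>0$, then $j$ becomes inactive, and the step still terminates at a new active constraint or a zero weight.
\end{itemize}

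\textbf{Expected obstacle.} The main difficulty is this last case, together with the strictness claim. I need the sign choice dictated by $c_j(\eta)\ge0$ to be compatible with $B(\omega,\eta)\le0$. I also need the process to make net progress, so that the active set does not cycle back to five constraints indefinitely. For termination, I would use that $N$ strictly decreases whenever $N(\eta)<0$, and that there are finitely many active sets. For the compatibility of signs, I would first try to exploit the identity \eqref{eq:EulerIdentity}, polarized so as to relate $B(\omega,\eta)$ to $\sum_i q_i(\omega)c_i(\eta)$-type terms. If that fails, I would fall back on choosing, within the two-dimensional $K_{A'}$, a vector on the boundary of the negative cone on the correct side.
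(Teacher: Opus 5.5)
Your overall scheme is the right one: freeze the active constraints, use the quadratic structure \eqref{eq:quadraticline}, and stop at a new contact or a vanishing weight. But the proof is not complete. The case of exactly five active constraints, which you flag as the ``expected obstacle'', is the heart of the proposition, and you leave it open.

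At such a point the directions preserving the five active equalities form your line $K_A$, and nothing in your argument excludes $N>0$ on that line. In that situation nothing rules out that $\omega$ is a local minimizer of $N$ on the feasible polyhedron, so no purely local descent argument can close the step. The Lorentzian signature of $N$ you invoke is true (via Alexandrov--Fenchel), but it only guarantees nonpositive vectors in two-dimensional subspaces. It says nothing about the sign of $N$ on a prescribed line.

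Your relaxation to a four-element $A'$ does not repair this, for two reasons.
\begin{itemize}
\item Once the sign of $\eta$ is dictated by $c_j(\eta)\ge0$, the linear term $B(\omega,\eta)$ may be positive. Concavity on the one-sided interval $[0,t_+]$ then gives no decrease, since the minimum may sit at $t=0$.
\item Since constraint $j$ is released, you may land again on a point with five contacts. Strict decrease of $N$ plus finitely many active sets does not bound the number of steps, because the points with a given five-element active set form a segment, not a finite set.
\end{itemize}
Similarly, strictness in your case $|A|\le4$ rests on ``generically $N(\eta)<0$'', which is not proved. Your treatment of seven contacts as a ``degenerate case'' leaves the ``exactly six'' conclusion unjustified.

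The missing ingredient is Lemma~\ref{lem:negative}: for \emph{every} five-element set $J$ of cut indices, the kernel line of $\mathsf C_J$ consists of vectors with $N<0$. The paper proves it as follows.
\begin{itemize}
\item By the $S_4$ symmetry, it reduces to the three orbits of omitted pairs: $\{5,6\}$, $\{0,4\}$ and $\{2,3\}$.
\item These have explicit generators $(0,1,-1,-1,1,0)$, $(0,1,1,0,0,-1)$ and $(0,-1,1,-1,1,0)$.
\item For these, $N(\eta)$ equals $-w_0-w_1-w_2-w_3+w_5+w_6$, $-w_1$, and $-w_0-w_1+w_2+w_3-w_5-w_6$ respectively.
\item Negativity follows from the strict Hornich--Hlawka inequality \eqref{eq:Hlawkaw} and from strict triangle inequalities in $\|\cdot\|_Q$.
\end{itemize}

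With this lemma you can drop your case split altogether.
\begin{itemize}
\item Pad the active set to a five-element $J$ and move along $\ker\mathsf C_J$ in \emph{both} directions. All constraints in $J$ stay unchanged, so no sign choice is needed.
\item The feasible interval is compact with $t_-<0<t_+$, because nonzero kernel vectors have entries of both signs.
\item Strict concavity then gives a strict decrease at an endpoint, where either a weight vanishes or a constraint outside $J$ becomes active.
\end{itemize}
The same strict inequality \eqref{eq:Hlawkaw}, combined with \eqref{eq:cutrelation}, shows that seven contacts can never occur, not merely generically. So the iteration terminates with exactly six.
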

 The proof of Proposition~\ref{prop:descent} is based on a finite-case lemma.  We first introduce the cut matrix $\mathsf C$.   
 
Let $\mathsf C$ be the $7\times6$ matrix satisfying $(c_0(\omega),\dots,c_6(\omega))^T=\mathsf C (a,b,c,d,e,f)^T$.  Explicitly,
\begin{equation}\label{eq:cutmatrix}
\mathsf C =
\begin{pmatrix}
1&1&1&0&0&0\\
1&0&0&1&1&0\\
0&1&0&1&0&1\\
0&0&1&0&1&1\\
0&1&1&1&1&0\\
1&0&1&1&0&1\\
1&1&0&0&1&1
\end{pmatrix}.
\end{equation}
   If $\mathsf C^{(i)}$ denotes the $6\times6$ matrix obtained by deleting row $i$, direct elimination gives
\[
 \bigl(\det \mathsf C^{(0)},\det \mathsf C^{(1)},\ldots,\det \mathsf C^{(6)}\bigr)
 =(8,-8,8,-8,-8,8,-8).
\]
In particular no six rows are dependent, i.e. 
$ \operatorname{rank}\mathsf C=6$.  
Its left kernel is generated by
\[
  (-1,-1,-1,-1,1,1,1),
\]
which coincides with equation \eqref{eq:cutrelation}.
 
Let now  $w_i=\|z_i\|_Q$, as introduced in \eqref{eq:wiN}. 
We recall the Hornich--Hlawka inequality, see \cite{Hornich1942}, which holds for every three vectors $x,y,z$ in a real inner-product space:
\begin{equation}\label{eq:Hlawka}
 \|x+y+z\|+\|x\|+\|y\|+\|z\|
 \ge \|x+y\|+\|y+z\|+\|z+x\|.
\end{equation}
For completeness, we record the factorization that will also give the strict inequality needed below.  Set
\[
 \alpha=\|x\|,\quad \beta=\|y\|,\quad \gamma=\|z\|,\quad
 \delta=\|x+y+z\|,
\]
\[
 X=\|x+y\|,\quad Y=\|y+z\|,\quad Z=\|z+x\|.
\]
Expanding the inner products gives
\begin{equation}\label{eq:Hlawkasquares}
 \alpha^2+\beta^2+\gamma^2+\delta^2=X^2+Y^2+Z^2.
\end{equation}
Writing $S=\alpha+\beta+\gamma+\delta$ and $T=X+Y+Z$, a direct expansion using \eqref{eq:Hlawkasquares} yields
\begin{align}\label{eq:Hlawkafactor}
 S(S-T)={}&(\alpha+\beta-X)(\gamma+\delta-X)\notag\\
 &+(\beta+\gamma-Y)(\alpha+\delta-Y)\notag\\
 &+(\gamma+\alpha-Z)(\beta+\delta-Z).
\end{align}
Every factor on the right-hand side is nonnegative by the triangle inequality.  If $S=0$, all three vectors vanish and \eqref{eq:Hlawka} is trivial; otherwise $S>0$, so \eqref{eq:Hlawkafactor} implies $S\ge T$ and proves \eqref{eq:Hlawka}.

We now apply \eqref{eq:Hlawka} with $x=z_1$, $y=z_2$, $z=z_3$ in the norm $\|\cdot\|_Q$.  Since
\[
 z_0=z_1+z_2+z_3,\qquad z_4=z_2+z_3,\qquad
 z_5=z_1+z_3,\qquad z_6=z_1+z_2,
\]
we obtain the non-strict Hornich--Hlawka inequality for the $w_i$.  It is strict in this particular configuration.  Indeed, in the first product on the right-hand side of \eqref{eq:Hlawkafactor},
\[
 \|z_1\|_Q+\|z_2\|_Q-\|z_1+z_2\|_Q>0
\]
because $z_1$ and $z_2$ are linearly independent, while
\[
 \|z_3\|_Q+\|z_0\|_Q-\|z_0-z_3\|_Q>0
\]
because equality would force $z_3$ and $-z_0$ to be positively collinear, which they are not.  Hence that product is strictly positive and therefore
\begin{equation}\label{eq:Hlawkaw}
  w_0+w_1+w_2+w_3> w_4+w_5+w_6.
\end{equation}

We now prove the negative descent lemma. 
\begin{lemma}\label{lem:negative}
Let $J\subset\{0,\dots,6\}$ contain five cut indices and let $\mathsf C_J$ be the $5\times6$ matrix obtained from $\mathsf C$ by deleting the rows whose indices are not in $J$. Then there exists $\eta\in\R^6$  which satisfies 
\begin{equation}\label{eq:NetaNegative}
  \mathsf C_J\eta=0\qquad \text{and}\qquad N(\eta)<0.
\end{equation}
\end{lemma}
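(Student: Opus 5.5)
The plan is to use that $\mathsf C_J$ has a one-dimensional kernel, compute a generator for each configuration, and show $N<0$ on it using only triangle inequalities for $\|\cdot\|_Q$ and the strict Hornich--Hlawka inequality \eqref{eq:Hlawkaw}.

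\textbf{Step 1: the kernel is a line.} Since no six rows of $\mathsf C$ are dependent, any five rows are independent. Hence $\mathsf C_J$ has rank $5$ and $\ker\mathsf C_J$ is spanned by a single vector $\eta_J$. Because $N$ is a quadratic form, $N(\eta_J)$ does not depend on the choice of generator. So the lemma reduces to evaluating $N$ at an explicit generator, using \eqref{eq:Nquadratic}.

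\textbf{Step 2: reduction by symmetry.} The set $J$ is determined by the two deleted cut indices. Under the $S_4$ relabelling of Remark~\ref{rem:S4}, which permutes the seven cuts while preserving their type and transforms $Q$ by a unimodular congruence, the $21$ pairs of deleted indices fall into three orbits:
\begin{enumerate}
\item two $2+2$ cuts;
\item two singleton cuts;
\item one singleton cut and one $2+2$ cut.
\end{enumerate}
Since the claim must hold for every $Q>0$, it suffices to treat one representative of each orbit, with $Q$ arbitrary.

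\textbf{Step 3: the three representatives.}

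\emph{Deleted rows $\{4,5\}$.} Imposing $c_0=\dots=c_3=0$ forces $a=f=x$, $b=e=y$, $c=d=z$ with $x+y+z=0$. Then $c_6=-2z$, so $c_6=0$ gives $z=0$ and, up to scaling,
\[
\eta=(1,-1,0,0,-1,1).
\]
Reading off \eqref{eq:qi} gives $q_0=q_1=q_2=q_3=-1$, $q_4=q_5=1$, $q_6=0$. Hence
\[
N(\eta)=w_4+w_5-(w_0+w_1+w_2+w_3)<-w_6<0
\]
by \eqref{eq:Hlawkaw}.

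\emph{Deleted rows $\{0,1\}$.} Solving $c_2=c_3=c_4=c_5=c_6=0$: adding $c_2$ and $c_3$ and subtracting $c_4$ gives $f=0$; then $b=-d$, $c=-e$, and the last two equations force $a=0$ and $d=e$. So
\[
\eta=(0,-1,-1,1,1,0),
\]
and
\[
N(\eta)=w_0+w_1-w_2-w_3-w_5-w_6 .
\]
The decompositions $z_0=z_5+z_2$, $z_1=z_6-z_2$, $z_0=z_6+z_3$, $z_1=z_5-z_3$ give four triangle inequalities. Averaging the two resulting bounds on $w_0+w_1$ yields
\[
w_0+w_1\le w_2+w_3+w_5+w_6 .
\]
The inequality is strict because, for example, $z_6$ and $z_3$ are not positively collinear. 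Hence $N(\eta)<0$.

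\emph{Deleted rows $\{0,4\}$.} Here I would compute $\eta$ by the same direct elimination, or by Cramer's rule via signed $5\times5$ minors. I expect a vector with entries in $\{0,\pm1,\pm2\}$. Then $-N(\eta)$ should again split into a positive combination of triangle defects of the form $\|u\|_Q+\|v\|_Q-\|u+v\|_Q$, where $u,v$ run over the relations $z_0=z_1+z_2+z_3$, $z_4=z_2+z_3$, and so on, possibly plus a multiple of the Hornich--Hlawka defect.

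\textbf{Main obstacle.} The difficulty is this last step: finding the right pairing of the $w_i$'s so that each negative term dominates a positive one through a valid triangle or Hornich--Hlawka inequality, with at least one strict. As in the two-singleton case, I would first try averaging two different decompositions of each positive-coefficient $z_i$ as a sum or difference of $2+2$ and singleton normals. If that fails, I would fall back on the factorization \eqref{eq:Hlawkafactor} or on a Cauchy--Schwarz bound in the $Q$-inner product.
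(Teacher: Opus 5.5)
Your two completed cases are correct. For deleted rows $\{4,5\}$, the vector $(1,-1,0,0,-1,1)$ gives $N(\eta)=w_4+w_5-(w_0+w_1+w_2+w_3)<-w_6<0$ by \eqref{eq:Hlawkaw}. For deleted rows $\{0,1\}$, your averaging of the four strict triangle inequalities is valid. It also removes the case split $w_5\le w_6$ versus $w_6\le w_5$ that the paper uses in its two-singleton case.

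\textbf{The gap.} The third orbit (one singleton cut and one $2+2$ cut deleted) contains $12$ of the $21$ choices of $J$. For it you give neither a kernel vector nor a sign computation, only an expectation of what might work. As it stands, the lemma is not proved.

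\textbf{How to close it.} The missing case is actually the easiest, and your expected structure (entries $\pm2$, a combination of triangle defects) does not occur. For deleted rows $\{0,4\}$, solve $c_1=c_2=c_3=c_5=c_6=0$:
\begin{itemize}
\item $(c_5-c_3)-c_1=-2e$ gives $e=0$, and then $c_1$ gives $a=-d$;
\item $c_6-c_2=a-d$ gives $a=d$, hence $a=d=0$;
\item $c_2$ and $c_3$ then give $b=c=-f$.
\end{itemize}
So $\ker\mathsf C_J$ is spanned by $\eta=(0,1,1,0,0,-1)$, with $\mathsf C\eta=(2,0,0,0,2,0,0)^T$. At this $\eta$ every facet polynomial in \eqref{eq:qi} vanishes except $q_1=bc+bf+cf=1-1-1=-1$. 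Hence $N(\eta)=-w_1<0$, with no triangle or Hornich--Hlawka inequality needed. Adding this computation, which is the paper's Case~2, completes your argument.
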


\begin{proof}
By the $S_4$ symmetry described in Remark~\ref{rem:S4} it is enough to treat one representative of each orbit of the natural $S_4$ action on the unordered pairs of omitted cuts.  There are exactly three such orbits, according to the type of the omitted pair.  Indeed, $S_4$ is $2$-transitive on the four singleton cuts, so all pairs of singleton cuts form one orbit; the natural surjection $S_4\to S_3$ onto the permutations of the three $2+2$ cuts is $2$-transitive on them, so all pairs of $2+2$ cuts form one orbit; finally, for a mixed pair, the stabilizer of a vertex is isomorphic to $S_3$ acting transitively on the three $2+2$ cuts, so the orbit of a mixed pair has $4\cdot3=12$ elements, that is, all of them.  

\medskip
\noindent\textbf{Case 1: the omitted cuts are $5$ and $6$ (two $2+2$ cuts).}
Choose
\[
  \eta=(0,1,-1,-1,1,0)\qquad\text{in the coordinates }(a,b,c,d,e,f).
\]
Using \eqref{eq:cutmatrix} we get
$ 
  \mathsf C\eta=(0,0,0,0,0,-2,2)^T,$ which implies that 
  $\mathsf C_J\eta=0$.  Substitution into \eqref{eq:Nquadratic} and using   \eqref{eq:Hlawkaw}, gives
\[
 N(\eta)=-w_0-w_1-w_2-w_3+w_5+w_6<-w_4<0.
\]

\medskip
\noindent\textbf{Case 2: the omitted cuts are $0$ and $4$ (one singleton and one $2+2$ cut).}
Choose 
\[
  \eta=(0,1,1,0,0,-1).
\]
Then
$ \mathsf C\eta=(2,0,0,0,2,0,0)^T$, so $\mathsf C_J\eta=0$
and a direct substitution gives
$N(\eta)=-w_1<0$.

\medskip
\noindent\textbf{Case 3: the omitted cuts are $2$ and $3$ (two singleton cuts).}
Choose 
\[
  \eta=(0,-1,1,-1,1,0).
\]
Then
$  \mathsf C\eta=(0,0,-2,2,0,0,0)^T
$ so $\mathsf C_J\eta=0$ and 
\begin{equation}\label{eq:NetaCase3}
  N(\eta)=-w_0-w_1+w_2+w_3-w_5-w_6.
\end{equation}
If $w_5\le w_6$, then the strict triangle inequalities
\[
 w_2=\|z_0-z_5\|_Q<w_0+w_5,
 \qquad
 w_3=\|z_5-z_1\|_Q<w_5+w_1
\]
hold because the corresponding vector pairs are nonparallel.  Hence
\[
 w_2+w_3<w_0+w_1+2w_5
 \le w_0+w_1+w_5+w_6,
\]
which makes \eqref{eq:NetaCase3} negative.  If $w_6\le w_5$, use instead
\[
 w_2=\|z_6-z_1\|_Q<w_6+w_1,
 \qquad
 w_3=\|z_0-z_6\|_Q<w_0+w_6,
\]
to obtain the same conclusion.

These three cases exhaust the $S_4$ orbits and prove the lemma.
\end{proof}

\begin{proof}[Proof of Proposition~\ref{prop:descent}]
All seven constraints \eqref{eq:constraints} cannot be equalities, because the cut identity~\eqref{eq:cutrelation} together with $c_i(\omega)=w_i$ for every $i$ would contradict the strict inequality~\eqref{eq:Hlawkaw}.  Let $\mathcal A$ be the set of active cut constraints.  If $|\mathcal A|=6$, choose $\omega^*=\omega$ and we are done.  Assume therefore that $|\mathcal A|\le5$.

Choose a set $J$ of five cut indices with $\mathcal A\subset J$.  Since every five rows of $\mathsf C$ are independent, $\ker\mathsf C_J$ is one-dimensional.  Lemma~\ref{lem:negative} provides a nonzero vector $\eta_0\in\ker\mathsf C_J$ with $N(\eta_0)<0$.  Hence every nonzero generator $\eta=t\eta_0$ of this kernel satisfies
\[
 N(\eta)=t^2N(\eta_0)<0.
\]
Fix such a generator $\eta$ and consider the maximal feasible interval
\[
 I_\omega:=\{t\in\R:(\omega+t\eta)_j\ge0\ \text{for every graphical coordinate }j,
 \quad c_i(\omega+t\eta)\ge w_i\ \text{for all }i\}.
\]
Because all graphical weights of $\omega$ are positive and every constraint outside $\mathcal A$ has positive slack, $0$ belongs to the interior of $I_\omega$.

We next justify that $I_\omega$ is compact.  A nonzero element of $\ker\mathsf C_J$ cannot have all coordinates nonnegative: if $\eta\ge0$ and $\mathsf C_J\eta=0$, then each of the five cut sums is zero, and each column of \eqref{eq:cutmatrix} contains four entries equal to $1$, so the supports of any five rows cover all six graphical coordinates; hence every coordinate of $\eta$ is zero.  Applying the same argument to $-\eta$ shows that a nonzero kernel vector cannot have all coordinates nonpositive either.  Thus $\eta$ has at least one positive and one negative coordinate.  The constraints $\omega+t\eta\ge0$ therefore bound $t$ from both sides.  Since the feasible conditions are closed, $I_\omega=[t_-,t_+]$ is a compact nondegenerate interval with $t_-<0<t_+$.

By \eqref{eq:quadraticline}, the function $t\mapsto N(\omega+t\eta)$ is a quadratic with second derivative $2N(\eta)<0$, hence it is strictly concave on $I_\omega$.  Its minimum is therefore attained at an endpoint.  Choose $t_*\in\{t_-,t_+\}$ realizing this minimum and put $\omega^*=\omega+t_*\eta$.  Since $t_-<0<t_+$, writing
\[
 0=\theta t_-+(1-\theta)t_+,\qquad
 \theta=\frac{t_+}{t_+-t_-}\in(0,1),
\]
strict concavity gives
\begin{equation}\label{eq:strictstep}
 N(\omega)>\theta N(\omega+t_-\eta)+(1-\theta)N(\omega+t_+\eta)
 \ge\min\{N(\omega+t_-\eta),N(\omega+t_+\eta)\}=N(\omega^*),
\end{equation}
so each such step strictly decreases $N$.

At an endpoint of the maximal feasible interval, either a graphical weight vanishes or a previously inactive cut inequality becomes an equality.  If a graphical weight vanishes, we are done.  Otherwise all graphical weights of $\omega^*$ remain positive.  For every $i\in J$ we have $c_i(\omega^*)=c_i(\omega)$ because $\mathsf C_J\eta=0$, so any newly active cut constraint has index outside $J$.  Thus the number of active cut constraints strictly increases.  If it is still below six, repeat the construction.  All seven constraints can never be active by \eqref{eq:Hlawkaw}, so after finitely many steps the process reaches either a zero graphical weight or exactly six active cut constraints.  By \eqref{eq:strictstep}, $N$ strictly decreases at each step, so the final weighting $\omega^*$ satisfies $N(\omega^*,Q)<N(\omega,Q)$ whenever at least one step has been performed, that is, whenever at most five constraints are active at $\omega$.
\end{proof}

 The following corollary handles the boundary endpoint of the descent.

\begin{corollary}\label{cor:boundaryfull}
Let $\omega^*$ be as in Proposition~\ref{prop:descent} and assume that at least one graphical weight of $\omega^*$ is zero.  Then
\[
 N(\omega^*,Q)\ge \frac{3}{\sqrt2}.
\]
\end{corollary}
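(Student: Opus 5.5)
The plan is to reduce the statement to the non-truncated case, Proposition~\ref{prop:boundaryRDC}, through the inradius inequality of Remark~\ref{rem:boundarysupport}, splitting according to whether the support graph of $\omega^*$ is connected.

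\textbf{Connected support graph.} Suppose the support graph $G_{\omega^*}$ of positive weights is connected. By Proposition~\ref{prop:CNrepresentation}, the body $P^*:=Q^{-1/2}Z_{\omega^*}$ is a three-dimensional parallelohedron. Since $\omega^*$ has a zero weight, $P^*$ has fewer than six zone directions. Hence it is not a truncated octahedron and belongs to one of the four non-truncated Fedorov types.

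Here $\lambda=1$ by construction, since $\det Q^{-1/2}=1$. Feasibility $c_i(\omega^*)\ge w_i$ for every $i$ then gives, by Remark~\ref{rem:boundarysupport},
\[
r(P^*)\ge \tfrac12 \qquad\text{and}\qquad \Area(P^*)=2N(\omega^*,Q).
\]
Proposition~\ref{prop:boundaryRDC} now yields
\[
2N(\omega^*,Q)=\Area(P^*)\ge 12\sqrt2\,r(P^*)^2\ge 3\sqrt2,
\]
that is, $N(\omega^*,Q)\ge 3/\sqrt2$.

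\textbf{Disconnected support graph.} This is the step I expect to be the main obstacle, because $Z_{\omega^*}$ is then degenerate and has no inradius. I would show it cannot occur at the endpoint of the descent.

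If $G_{\omega^*}$ is disconnected, there is a nontrivial cut $S\mid S^c$ of $K_4$ all of whose crossing edges have zero weight, so $c_i(\omega^*)=0$ for the corresponding index $i$. But feasibility requires $c_i(\omega^*)\ge w_i=\|z_i\|_Q>0$, because $Q$ is positive definite and $z_i\neq0$. This is a contradiction, so $G_{\omega^*}$ is always connected and the first case applies.

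\textbf{Care points.} The argument above needs care on two points. First, "at least one zero weight" must place $P^*$ outside the truncated-octahedral stratum. This holds because a truncated octahedron requires seven facet pairs, i.e.\ all $q_i>0$; in particular $q_4=af$, $q_5=be$ and $q_6=cd$ must all be positive, which forces all six weights to be positive. Second, Remark~\ref{rem:boundarysupport} must be used only through the facet directions in $F(\omega^*)$. There the inequality $c_i\ge w_i$ is used only in the direction needed to get $r(P^*)\ge 1/2$, so the non-facet cut directions cause no difficulty.
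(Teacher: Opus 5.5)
Your proof is correct and matches the paper's argument step by step: you get connectedness from $c_i(\omega^*)\ge w_i>0$, apply Proposition~\ref{prop:CNrepresentation} to $Q^{-1/2}Z_{\omega^*}$, exclude the truncated-octahedral type via $q_4=af$, $q_5=be$, $q_6=cd$, and conclude with Remark~\ref{rem:boundarysupport} and Proposition~\ref{prop:boundaryRDC}. The ``fewer than six zone directions'' sentence alone does not rule out the truncated-octahedral type; the facet-pair argument in your care points does, exactly as in the paper.
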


\begin{proof}
Set $L:=Q^{-1/2}$, so that $\det L=1$ and the metric associated with $L$ through \eqref{eq:lambdaQ} is exactly $Q$.  First observe that the support graph of $\omega^*$ is connected.  Otherwise a nontrivial component cut would contain no positive edge, hence its cut weight would be zero, contradicting $c_i(\omega^*)\ge w_i>0$.  By Proposition~\ref{prop:CNrepresentation}, $LZ_{\omega^*}$ is therefore a three-dimensional parallelohedron.

Moreover, the vanishing of any graphical generator removes at least one facet pair.  Indeed, from \eqref{eq:qi},
\[
 a=0\ \text{or}\ f=0\Rightarrow q_4=af=0,\qquad
 b=0\ \text{or}\ e=0\Rightarrow q_5=be=0,
\]
\[
 c=0\ \text{or}\ d=0\Rightarrow q_6=cd=0.
\]
Since $q_i>0$ is exactly the condition that the corresponding cut direction gives a facet pair, $LZ_{\omega^*}$ has at most six opposite facet pairs, hence at most $12$ facets.  It is therefore not of truncated-octahedral Fedorov type.

By Remark~\ref{rem:boundarysupport} and Proposition~\ref{prop:boundaryRDC},
\[
 N(\omega^*,Q)\ge
 \frac{\Area(LZ_{\omega^*})}{8r(LZ_{\omega^*})^2}
 \ge \frac{12\sqrt2}{8}=\frac{3}{\sqrt2}.
\]
\end{proof}
Thus the only remaining case is an interior six-generator weighting with exactly six tangent facet pairs.

\section{Contact configurations and four-point metrics}
\label{sec:contacts}
 
Define the tangency slacks
\begin{equation}\label{eq:deltai}
  \delta_i(\omega):=c_i(\omega)-w_i\ge0.
\end{equation}
Also define the Hornich--Hlawka deficit
\begin{equation}\label{eq:hdef}
  h:=w_0+w_1+w_2+w_3-w_4-w_5-w_6>0.
\end{equation}
Using the cut identity \eqref{eq:cutrelation} and the definition of $h$ gives
\begin{equation}\label{eq:deltarelation}
  \delta_4+\delta_5+\delta_6
  =\delta_0+\delta_1+\delta_2+\delta_3+h.
\end{equation}

\begin{lemma}\label{lem:slackdouble}
Suppose that exactly six of the seven constraints $c_i(\omega)\ge w_i$ are equalities.  Then the unique nonzero slack $\delta_i$ belongs to one of the three $2+2$ cuts.  After relabelling the vertices of $K_4$, we may assume
\begin{equation}\label{eq:sixcontacts}
  \delta_i=0\quad(i\ne4),\qquad \delta_4=h>0.
\end{equation}
\end{lemma}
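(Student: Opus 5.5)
The plan is to read everything off the slack relation \eqref{eq:deltarelation} together with $h>0$ from \eqref{eq:Hlawkaw}. Suppose exactly six of the seven slacks $\delta_i=c_i(\omega)-w_i$ vanish and let $k$ be the unique index with $\delta_k>0$. We rule out the case where $k$ is a singleton cut. If $k\in\{0,1,2,3\}$, then $\delta_4=\delta_5=\delta_6=0$, so the left-hand side of \eqref{eq:deltarelation} is zero. The right-hand side, however, equals $\delta_k+h$, which is strictly positive because $\delta_k\ge0$ and $h>0$ by \eqref{eq:Hlawkaw}. This contradiction forces $k\in\{4,5,6\}$, i.e. the nonzero slack sits on a $2+2$ cut. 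Substituting back into \eqref{eq:deltarelation}, every $\delta_i$ with $i\neq k$ vanishes, so the relation reads $\delta_k=h$ directly.

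It remains to relabel so that $k=4$. By Remark~\ref{rem:S4}, the group $S_4$ acts on the three $2+2$ cuts through the surjection $S_4\to S_3$. This action is transitive, as already used in the proof of Lemma~\ref{lem:negative}, so there is a permutation of the vertices of $K_4$ sending cut $k$ to cut $4$. The relabelling acts simultaneously on $\omega$, on the cuts, on the $q_i$ and on the $w_i$, with the corresponding congruence change of $Q$ preserving $Q>0$ and $\det Q=1$.

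The one point requiring care is that $h$ is invariant under this relabelling. Under the relabelling, the four singleton cuts permute among themselves and the three $2+2$ cuts permute among themselves. The congruence change of $Q$ is compatible with the induced map on cut normals, so the multiset $\{w_0,\dots,w_3\}$ and the multiset $\{w_4,w_5,w_6\}$ are each preserved. Hence $h=w_0+w_1+w_2+w_3-w_4-w_5-w_6$ is unchanged, and \eqref{eq:sixcontacts} follows.

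I expect no real obstacle here. The only step needing attention is this bookkeeping: checking that the relabelled $w_i$ are still of the form $\|z_i\|_{Q'}$ for the new metric $Q'$, which Remark~\ref{rem:S4} guarantees.
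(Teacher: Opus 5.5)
Your proof is correct and matches the paper's argument: \eqref{eq:deltarelation} together with $h>0$ rules out a singleton slack, and transitivity of the $S_4$ action on the $2+2$ cuts (Remark~\ref{rem:S4}) moves the slack to index $4$, where the relation gives $\delta_4=h$. Your extra check that $h$ is invariant under relabelling is harmless but not strictly needed, since \eqref{eq:deltarelation} holds verbatim in the relabelled coordinates.
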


\begin{proof}
If the unique positive slack were one of $\delta_0,\dots,\delta_3$, then the left-hand side of \eqref{eq:deltarelation} would vanish while the right-hand side would be strictly positive, a contradiction.  Hence the slack is one of $\delta_4,\delta_5,\delta_6$.  By Remark~\ref{rem:S4} the $S_4$ action is transitive on the three $2+2$ cuts and preserves both positive definiteness and the normalization $\det Q=1$, so we may assume that the slack is $\delta_4$.  Equation \eqref{eq:deltarelation} then gives $\delta_4=h$.
\end{proof}
 
Assume from now on that
\begin{equation}\label{eq:activeexcept4}
  c_i(\omega)=w_i\qquad(i\ne4)\qquad c_4(\omega)>w_4.
\end{equation}
 
We identify the six active lengths $w_i$, $i\neq 4$,  with  the six edges of the $Q$ metric introduced in \eqref{eq:wiN}  restricted to  the four points  $\{0,z_0, z_2, z_6\}$, that is 
\begin{equation}\label{eq:dedges}
\begin{aligned}
\ell_{01}&=\|0-z_0\|_Q=w_0,  &\ell_{23}&=\|z_2-z_6\|_Q=\|-z_1\|_Q=w_1,\\
\ell_{02}&=\|0-z_2\|_Q=w_2, &\ell_{13}&=\|z_0-z_6\|_Q=\|z_3\|_Q=w_3,\\
\ell_{12}&=\|z_0-z_2\|_Q=\|z_5\|_Q=w_5, &\ell_{03}&=\|0-z_6\|_Q=w_6.
\end{aligned}
\end{equation}

We associate to the four-point metric $\ell=(\ell_{ij})$, 
the Gram-type matrices introduced in~\cite[Section 4]{CesaroniNovaga2026}:
\begin{equation}\label{eq:Gamma}
\Gamma(\ell):=
\begin{pmatrix}
 \ell_{01} & \frac{\ell_{01}+\ell_{02}-\ell_{12}}2 & \frac{\ell_{01}+\ell_{03}-\ell_{13}}2\\[0.4em]
 \frac{\ell_{01}+\ell_{02}-\ell_{12}}2 & \ell_{02} & \frac{\ell_{02}+\ell_{03}-\ell_{23}}2\\[0.4em]
 \frac{\ell_{01}+\ell_{03}-\ell_{13}}2 & \frac{\ell_{02}+\ell_{03}-\ell_{23}}2 & \ell_{03}
\end{pmatrix},
\end{equation}
and
\begin{equation}\label{eq:GammaSquared}
 \Gamma(\ell^{\circ2}):=
\begin{pmatrix}
 \ell_{01}^2 & \frac{\ell_{01}^2+\ell_{02}^2-\ell_{12}^2}2 & \frac{\ell_{01}^2+\ell_{03}^2-\ell_{13}^2}2\\[0.4em]
 \frac{\ell_{01}^2+\ell_{02}^2-\ell_{12}^2}2 & \ell_{02}^2 & \frac{\ell_{02}^2+\ell_{03}^2-\ell_{23}^2}2\\[0.4em]
 \frac{\ell_{01}^2+\ell_{03}^2-\ell_{13}^2}2 & \frac{\ell_{02}^2+\ell_{03}^2-\ell_{23}^2}2 & \ell_{03}^2
\end{pmatrix}.
\end{equation}
Recalling that  $w_i^2=z_i^TQz_i$, we get   
\begin{equation}\label{eq:Gammagram}
  \Gamma(\ell^{\circ2})=M_0^TQM_0
\end{equation} where $M_0$ is the $3\times3$ matrix with columns $z_0,z_2,z_6$. 
Explicitly,
\[
 M_0=(z_0\ z_2\ z_6)
 =\begin{pmatrix}1&0&1\\1&1&1\\1&0&0\end{pmatrix},
 \qquad \det M_0=-1.
\]
Hence, since $\det Q=1$,
\begin{equation}\label{eq:detGamma1}
  \det\Gamma(\ell^{\circ2})=1.
\end{equation}

 Because of \eqref{eq:activeexcept4} and \eqref{eq:ci}, the distances in \eqref{eq:dedges} have the explicit form
\begin{equation}\label{eq:identification}
\begin{aligned}
 \ell_{01}&=w_0=a+b+c, &  \ell_{02}&=w_2=b+d+f,  & \ell_{03}&=w_6=a+b+e+f,\\
 \ell_{12}&=w_5=a+c+d+f, & \ell_{13}&=w_3=c+e+f, & \ell_{23}&=w_1=a+d+e.
\end{aligned}
\end{equation} therefore \eqref{eq:Gamma} reads \[ 
\Gamma(\ell)=
\begin{pmatrix}
a+b+c & b & a+b\\[0.4em]
b & b+d+f & b+f \\[0.4em]
 a+b & b+f & a+b+e+f
\end{pmatrix} \qquad  \text{and}\qquad \det\Gamma(\ell)= D(\omega)  
\] where the last equality comes from  \eqref{eq:D} and direct computation.

Therefore, in our geometric configuration the determinant remainder is
\begin{equation}\label{deltaR}
 \mathcal R:=2\det(\Gamma(\ell))^2-\det(\Gamma(\ell^{\circ2}))
 =2D(\omega)^2-1,
\end{equation}
where the last equality uses \eqref{eq:detGamma1}.  It is useful, however, to keep the determinant remainder in its homogeneous form before this normalization.

For the algebraic estimate, use the change of variables
\begin{equation}\label{eq:xmapping}
  x_0=b,\quad x_1=c,\quad x_2=d,\quad x_3=e,
  \quad u=f,\quad v=a,
\end{equation}
and set
\begin{equation}\label{eq:sABC}
\begin{gathered}
 s:=x_0+x_1+x_2+x_3,\qquad
 \Pi:=x_0x_1x_2x_3,\\
 A:=(x_0+x_1)(x_2+x_3),\qquad
 B:=(x_0+x_2)(x_1+x_3), \qquad
 C:=x_0x_3+x_1x_2,
\end{gathered}
\end{equation}
\begin{equation}\label{eq:Mi}
  m_i:=\prod_{j\ne i}x_j\qquad(0\le i\le3),\qquad E_3:=\sum_{i=0}^3 m_i,
\end{equation}
\begin{equation}\label{eq:UVW}
 U:=A^2-8\Pi,\qquad V:=B^2-8\Pi,\qquad W:=C^2-8\Pi,
\end{equation}
and
\begin{align}
\Lambda_u&:=4\bigl[x_0x_1(x_0+x_1)(x_2-x_3)^2
+x_2x_3(x_2+x_3)(x_0-x_1)^2\bigr],\label{eq:Lambdau}\\
\Lambda_v&:=4\bigl[x_0x_2(x_0+x_2)(x_1-x_3)^2
+x_1x_3(x_1+x_3)(x_0-x_2)^2\bigr],\label{eq:Lambdav}\\
B_{21}&:=4\bigl[x_0x_2(x_0+x_2)+x_0x_3(x_0+x_3)
+x_1x_2(x_1+x_2)+x_1x_3(x_1+x_3)\bigr],\label{eq:B21}\\
B_{12}&:=4\bigl[x_0x_1(x_0+x_1)+x_0x_3(x_0+x_3)
+x_1x_2(x_1+x_2)+x_2x_3(x_2+x_3)\bigr].\label{eq:B12}
\end{align}

\begin{lemma} \label{lem:CNremainder}
Let $a,b,c,d,e,f\ge0$, and define the six quantities $\ell_{ij}$ by the right-hand sides of \eqref{eq:identification}.  Then
\begin{align}
&2D(\omega)^2-\det\Gamma(\ell^{\circ2})\notag\\
&\quad=2\sum_{0\le i<j\le3}(m_i-m_j)^2
 +\Lambda_u u+\Lambda_vv\notag\\
&\qquad +2(Uu^2+2Wuv+Vv^2)
 +B_{21}u^2v+B_{12}uv^2\notag\\
&\qquad +2u^2v^2\bigl(2(u+v)^2+4s(u+v)+3s^2\bigr).
\label{eq:CNremainder}
\end{align}
\end{lemma}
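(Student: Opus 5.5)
This is a polynomial identity in the six variables $a,\dots,f$, so the proof will be a direct, organized expansion. We never use $\det Q=1$ or any feasibility hypothesis. We work entirely with the homogeneous quantities: $D(\omega)$ from \eqref{eq:D}, and $\det\Gamma(\ell^{\circ2})$, where the $\ell_{ij}$ are the linear forms \eqref{eq:identification}. We pass to the variables \eqref{eq:xmapping}, so $b,c,d,e$ become $x_0,x_1,x_2,x_3$, and $f=u$, $a=v$. The natural organizing principle is to expand both sides as polynomials in $(u,v)$ whose coefficients are polynomials in $x_0,\dots,x_3$, and to compare coefficients of $u^iv^j$ for $0\le i,j$ with $i+j\le 4$. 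Both sides are homogeneous of degree $6$: $D$ is cubic, and the entries of $\Gamma(\ell^{\circ2})$ are quadratic. This bounds the work.

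\textbf{Step 1: the left-hand side.} We write $D=D_0+D_uu+D_vv+D_{uv}uv$. Each monomial of \eqref{eq:D} is a spanning tree of $K_4$, and $u=f$ and $v=a$ are the opposite edges $23$ and $01$. So $D$ is affine in each of $u$ and $v$ separately, and there is no $u^2$ or $v^2$ term. Reading off \eqref{eq:D}:
\begin{itemize}
\item $D_0$ is the sum of the monomials in $b,c,d,e$ only, namely $bcd+bce+bde+cde=E_3$;
\item $D_{uv}=b+c+d+e=s$;
\item $D_u$ and $D_v$ are the quadratic forms collecting $bf,cf,df,ef$ resp.\ $ab,ac,ad,ae$ type terms.
\end{itemize}
Checking the monomials, these should come out as $D_u=A$ and $D_v=B$: for example $D_u=bc+bd\cdots$, which we will verify against $A=(x_0+x_1)(x_2+x_3)$ and $B$. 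Then $2D^2$ follows by squaring.

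\textbf{Step 2: the Gram determinant.} Since \eqref{eq:Gammagram} is only available for genuine $Q$, we instead use the Cayley--Menger form. Up to the factor $-1/8$ (with the appropriate normalization), $\det\Gamma(\ell^{\circ2})$ is the Cayley--Menger determinant of the squared distances $\ell_{ij}^2$. We compute its entries, e.g.
\[
\ell_{01}^2+\ell_{02}^2-\ell_{12}^2=(v+x_0+x_1)^2+(x_0+x_2+u)^2-(v+x_1+x_2+u)^2 ,
\]
and likewise for the others. Each entry is a quadratic in $(u,v)$ with coefficients in $x$. We then expand the determinant by grading in $(u,v)$, tracking only the coefficients of the monomials $u^iv^j$. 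Symmetry reduces the work. The identity should be invariant under the involution exchanging $u\leftrightarrow v$ together with $x_1\leftrightarrow x_2$, which swaps $A\leftrightarrow B$, $\Lambda_u\leftrightarrow\Lambda_v$ and $B_{21}\leftrightarrow B_{12}$. So only about half of the coefficient comparisons are needed: $u^0v^0$, $u^1$, $u^2$, $uv$, $u^2v$, $u^2v^2$, $u^3v$, $u^4$.

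\textbf{Step 3: matching coefficients.} We match the $(u,v)$-coefficients one at a time.
\begin{itemize}
\item At order $u^0v^0$ we need $2E_3^2-\det\Gamma|_{u=v=0}=2\sum_{i<j}(m_i-m_j)^2$, a degree-$6$ symmetric identity in the $x_i$.
\item The linear terms give $\Lambda_u$ and $\Lambda_v$.
\item The quadratic terms should produce $2U$, $4W$ and $2V$; here $U=A^2-8\Pi$ arises as $2A^2$ minus the corresponding determinant coefficient.
\item The cubic terms give $B_{21}$ and $B_{12}$, and the vanishing of the pure $u^3$ and $v^3$ coefficients must also be confirmed.
\item The quartic terms give $2u^2v^2(2(u+v)^2+\dots)$; this block carries the pieces with $u^2v^2$ times lower-order terms in $s$.
\end{itemize}
We must also check that the pure $u^4$ and $u^3v$ coefficients cancel, or that they combine correctly with the listed terms. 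For example $2D^2$ contains $2s^2u^2v^2$, and the determinant must supply $-4(u+v)^2u^2v^2$ and so on.

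\textbf{Main obstacle.} We expect the degree-$0$ coefficient, a symmetric sextic identity, and the bookkeeping of the determinant's high-order $(u,v)$ terms to be the hardest parts. The determinant is not linear in $u,v$, because the squared lengths are quadratic. For these we will first use symmetry under the Klein four-group acting on the $x_i$ to reduce to a few monomial types. We will then verify each type by specialization, e.g.\ setting some $x_i=0$ or all $x_i$ equal, which suffices once the monomial support is known. Everything is routine but lengthy, and we will present it as direct expansion, with a symbolic check as confirmation.
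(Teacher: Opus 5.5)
Your route is the paper's: pass to $x_0,\dots,x_3,u,v$, write $D=E_3+Au+Bv+suv$, expand $\det\Gamma(\ell^{\circ2})$ directly, and compare coefficients of $u^iv^j$. Your decomposition of $D$ is correct. Your sample "$D_u=bc+bd+\cdots$" is not: $D_u=(b+c)(d+e)$ contains no $bc$, because $\{b,c,f\}$ is the triangle on $\{0,2,3\}$ and not a spanning tree. The Cayley--Menger detour is also unnecessary. $\det\Gamma(\ell^{\circ2})$ is by definition the determinant of the explicit $3\times3$ matrix \eqref{eq:GammaSquared}, whose entries you then compute anyway.

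There is, however, a concrete error in the scope of your coefficient comparison. As written, the plan would not establish the identity. You restrict to $i+j\le 4$ and reduce to the list $u^0v^0,u,u^2,uv,u^2v,u^2v^2,u^3v,u^4$. It is true that $2D^2$ has $(u,v)$-degree at most $4$. But every entry of $\Gamma(\ell^{\circ2})$ is quadratic in $(u,v)$, so the determinant has $(u,v)$-degree up to $6$.

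The right-hand side of \eqref{eq:CNremainder} does have terms of degree $5$ and $6$, namely $8s(u^3v^2+u^2v^3)$ and $4u^4v^2+8u^3v^3+4u^2v^4$. All of these must come from $-\det\Gamma(\ell^{\circ2})$ alone. You must also verify that $[u^3]$, $[u^5]$, $[u^4v]$, $[u^6]$, $[u^5v]$ and their mirror images vanish; your list omits these. Your Step 3 remark that the determinant must supply $-4u^2v^2(u+v)^2$ shows you partly noticed this, but it contradicts your stated bound.

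The fix is to compare all coefficients with $i+j\le 6$. Under your involution $u\leftrightarrow v$, $x_1\leftrightarrow x_2$, which is valid, this is sixteen classes. The top block is cheap: setting $x_0=\dots=x_3=0$ gives $\det\Gamma(\ell^{\circ2})=-4u^2v^2(u+v)^2$ directly. With that correction your plan coincides with the paper's coefficient list.
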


\begin{proof}
Substitute \eqref{eq:identification} into \eqref{eq:GammaSquared}, and set
\[
 F:=2D(\omega)^2-\det\Gamma(\ell^{\circ2}).
\]
Both $F$ and the right-hand side of \eqref{eq:CNremainder} are homogeneous polynomials of total degree six in $x_0,x_1,x_2,x_3,u,v$.  Write $[u^iv^j]F$ for the coefficient of $u^iv^j$ when $F$ is viewed as a polynomial in $u,v$.  Expanding the $3\times3$ determinant and collecting powers of $u,v$ gives
\begin{align*}
 [u^0v^0]F&=2\sum_{i<j}(m_i-m_j)^2, & [u]F&=\Lambda_u, & [v]F&=\Lambda_v,\\
 [u^2]F&=2U, & [uv]F&=4W, & [v^2]F&=2V,\\
 [u^2v]F&=B_{21}, & [uv^2]F&=B_{12}.
\end{align*}
The remaining nonzero coefficients are
\[
 [u^4v^2]F=[u^2v^4]F=4,\qquad [u^3v^3]F=8,\qquad
 [u^3v^2]F=[u^2v^3]F=8s,\qquad [u^2v^2]F=6s^2.
\]
All other coefficients vanish.  These are exactly the coefficients obtained by expanding the right-hand side of \eqref{eq:CNremainder}, which proves the identity.
\end{proof}

\begin{proposition}\label{prop:quantremainder}
Let $a,b,c,d,e,f\ge0$ with $b+c+d+e>0$, define $D(\omega)$ by \eqref{eq:D}, and define the six quantities $\ell_{ij}$ by the right-hand sides of \eqref{eq:identification}.  Then
\begin{equation}\label{eq:quantremaindernew}
 2D(\omega)^2-\det\Gamma(\ell^{\circ2})
 \ge \frac{16}{3(b+c+d+e)}\,D(\omega)\,af(be+cd+af).
\end{equation}
If $a,b,c,d,e,f>0$, the inequality is strict.  In particular, since $D(\omega)\ge0$ for nonnegative weights, the left-hand side of \eqref{eq:quantremaindernew} is nonnegative, and it is strictly positive when $a,b,c,d,e,f>0$.
\end{proposition}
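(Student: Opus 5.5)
The plan is to reduce \eqref{eq:quantremaindernew} to a polynomial inequality in the variables \eqref{eq:xmapping}. I will then compare both sides coefficient by coefficient in $(u,v)$, using the explicit expansion of Lemma~\ref{lem:CNremainder}.

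\textbf{Rewriting the target.} In the variables \eqref{eq:xmapping} we have $s=b+c+d+e$, $af=uv$ and $be+cd=x_0x_3+x_1x_2=C$. So, writing $F:=2D(\omega)^2-\det\Gamma(\ell^{\circ2})$, the claim is equivalent (for $s>0$) to
\[
 3s\,F\ \ge\ 16\,uv\,(C+uv)\,D .
\]
Next I expand $D$ from \eqref{eq:D} in powers of $u,v$:
\[
 D=E_3+u\,P(x)+v\,R(x)+uv\,s ,
\]
where $P$ and $R$ are nonnegative quadratic forms in $x$. For instance, the terms containing both $a$ and $f$ are $abf+acf+adf+aef=uv\,s$, and $E_3$ collects the terms in $b,c,d,e$ only. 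There are no $u^2$ or $v^2$ terms, since $D$ is multilinear. The right-hand side therefore only has monomials $u^iv^j$ with $i,j\ge1$, namely $uv$, $u^2v$, $uv^2$, $u^2v^2$, $u^3v^2$, $u^2v^3$ and $u^3v^3$. Their coefficients are explicit polynomials in $x$, with a factor $16/3\cdot(1/s)$ absorbed by the factor $3s$ on the left.

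\textbf{Coefficientwise comparison.} On the left, $3sF$ has the coefficients listed in the proof of Lemma~\ref{lem:CNremainder}, each multiplied by $3s$.
\begin{itemize}
\item The pure-$x$ term, the $u$, $v$ terms ($\Lambda_u,\Lambda_v\ge0$) and the $u^2$, $v^2$ terms ($U,V\ge0$, since $A^2,B^2\ge16\Pi$ by AM--GM) are all nonnegative. They have no counterpart on the right and form a reserve.
\item For the top-degree terms I compare directly: $3s(4u^4v^2+8u^3v^3+4u^2v^4+8s\,u^3v^2+8s\,u^2v^3+6s^2u^2v^2)$ against $16(u^3v^3 s+u^2v^2(C s+ uP+vR)+\dots)$. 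The pure $u^3v^3$ coefficient comparison is $24s\ge16s$. The remaining ones should reduce to inequalities such as $18s^3\ge16(Cs+\text{terms of }CP,CR)$ and $24s^2\ge 16(\cdot)$, which should follow from $C\le s^2/4$, $P,R\le O(s^2)$ and AM--GM. If a coefficient falls short, I will borrow from neighbouring monomials with the weighted AM--GM $u^{i}v^{j+1}+u^{i+1}v^{j}\ge 2u^{i+1/2}v^{j+1/2}$; this is legitimate since all variables are nonnegative.
\item The $u^2v$ and $uv^2$ coefficients are $3sB_{21}$ and $3sB_{12}$ versus $16(C R+ sE_3\text{-type})$ contributions. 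Here I expect a direct expansion to show the difference is a sum of nonnegative monomials.
\end{itemize}

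\textbf{Main obstacle.} The delicate term is the $uv$ coefficient. On the left it is $12sW=12s(C^2-8\Pi)$, which can be negative, since only $C^2\ge4\Pi$ holds. On the right it is $16\,C\,E_3$. To handle it I plan to treat the quadratic form in $(u,v)$ as a whole:
\[
 3s\bigl(2Uu^2+4Wuv+2Vv^2\bigr)+\bigl(\text{constant and } u,v\text{ reserve}\bigr)-16CE_3\,uv .
\]
I will show this is nonnegative by an AM--GM/discriminant argument, aiming for a bound of the form $(16CE_3-12sW)^2\le 36s^2UV$ whenever the left side is positive. Since $UV\ge(16\Pi)^2\cdot(\ldots)$ is large, this seems plausible. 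If it fails near degenerate points where some $x_i\to0$, I would additionally use $\Lambda_u u+\Lambda_v v$ together with the constant term $2\sum(m_i-m_j)^2$, pairing $x$-constant$\cdot u^2$ terms against $uv$ via $u^2+v^2\ge2uv$. An S$_4$-type symmetry reduction (swapping $x_0\leftrightarrow x_3$, $x_1\leftrightarrow x_2$, and $u\leftrightarrow v$ together with $x_1\leftrightarrow x_2$) should cut down the case analysis.

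\textbf{Strictness and the final claim.} When all weights are positive, the surplus $3s\cdot4u^2v^2(u^2+v^2)>0$, together with the strict margin $24s>16s$ in the $u^3v^3$ coefficient, yields strict inequality. Finally, $D\ge0$ gives nonnegativity of $F$. When all weights are positive, $F>0$ follows from the strict inequality because $D>0$ and $af(be+cd+af)>0$.
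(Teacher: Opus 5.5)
Your architecture is the paper's. You multiply by $3s$, compare the two sides sector by sector in $(u,v)$ using Lemma~\ref{lem:CNremainder}, and absorb the $uv$ term through $Uu^2+Vv^2\ge 2\sqrt{UV}\,uv$. The sectors of degree at least three in $(u,v)$ can indeed be closed coefficientwise, using $A,B,C\le s^2/4$, $E_3\le s^3/16$, $B_{21}\ge 2As$ and $B_{12}\ge 2Bs$.

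\textbf{The gap.} The degree-two sector is the one you correctly call the crux, and you do not prove it. What is needed there is exactly $12s(\sqrt{UV}+W)\ge 16CE_3$, i.e.\ $\sqrt{UV}+W\ge 4CE_3/(3s)$.
\begin{itemize}
\item Your stated target $(16CE_3-12sW)^2\le 36s^2UV$ has the wrong constant. The condition for $6sUu^2+(12sW-16CE_3)uv+6sVv^2\ge0$ on the quadrant $u,v\ge0$ is $(16CE_3-12sW)^2\le 144s^2UV$.
\item As written, your target is false. At $x_0=x_1=x_2=x_3=1$ one has $16CE_3-12sW=320>192=6s\sqrt{UV}$.
\item Your heuristic that $UV$ is large because of $\Pi$ cannot work. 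At $x=(10,1,1,0)$ one has $\Pi=0$ and $12sW=144<160=16CE_3$, so any lower bound for $UV$ expressed through $\Pi$ gives nothing there.
\item Your fallback to the terms of degree zero and one in $(u,v)$ is not an argument. It is also unnecessary, since the degree-two sector closes on its own.
\end{itemize}

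\textbf{The missing idea.} You need a lower bound for $\sqrt{UV}$ that retains the other two matchings $C'=x_0x_2+x_1x_3$ and $C''=x_0x_1+x_2x_3$.
\begin{itemize}
\item Since $A=C+C'$, $B=C+C''$ and $CC',CC''\ge 4\Pi$, you get $U\ge C^2+C'^2$ and $V\ge C^2+C''^2$.
\item Cauchy--Schwarz then gives $\sqrt{UV}\ge C^2+C'C''$.
\item It therefore suffices to show $2C^2+C'C''-8\Pi\ge 4CE_3/(3s)$. For this use the identities $sE_3=CC'+CC''+C'C''+4\Pi$ and $s^2\ge 2(C+C'+C'')$.
\item Normalize $X=C/(2\sqrt\Pi)$, $Y=C'/(2\sqrt\Pi)$, $Z=C''/(2\sqrt\Pi)$, all $\ge1$. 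The claim reduces to $3(X+Y+Z)(2X^2+YZ-2)\ge 2X(XY+XZ+YZ+1)$.
\item Substituting $X=1+p$, $Y=1+q$, $Z=1+r$, this becomes a polynomial in $p,q,r\ge0$ with positive coefficients. The case $\Pi=0$ follows by continuity.
\end{itemize}

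\textbf{Smaller corrections.}
\begin{itemize}
\item The right-hand coefficients of $u^2v$ and $u^2v^2$ are $16CA$ and $16(sC+E_3)$, not what you wrote.
\item Your expectation that $3sB_{21}-16CA$ is a sum of nonnegative monomials is false: its $x_0x_1x_2x_3$ coefficient is $-32$. There you need $B_{21}\ge 2As$ together with $C\le s^2/4$.
\item Your strictness argument is fine once the non-strict inequality is established. The unmatched surplus $12s(u^4v^2+u^2v^4)$ is positive when all weights are positive.
\end{itemize}
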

\begin{proof}
Set
\[
 \mathcal R_{\mathrm{hom}}:=2D(\omega)^2-\det\Gamma(\ell^{\circ2}).
\]
By Lemma~\ref{lem:CNremainder}, $\mathcal R_{\mathrm{hom}}$ is given by the exact expansion \eqref{eq:CNremainder}.  With the notation above, \eqref{eq:quantremaindernew} reads
\begin{equation}\label{eq:quantremainder}
  \mathcal R_{\mathrm{hom}}\ge \frac{16}{3s}\,D(\omega)\,uv(C+uv).
\end{equation}

We estimate separately the terms of total degree two, three, and at least four in $u,v$ on the right-hand side of \eqref{eq:CNremainder}.  The first line
\[
 2\sum_{i<j}(m_i-m_j)^2+\Lambda_u u+\Lambda_vv
\]
is nonnegative and will not be needed.

\subsubsection*{Step 1: the quadratic sector}
Introduce the two other perfect matchings
\begin{equation}\label{eq:Cprime}
  C':=x_0x_2+x_1x_3,
  \qquad
  C'':=x_0x_1+x_2x_3.
\end{equation}
Then
\begin{equation}\label{eq:AasC}
  A=C+C',\qquad B=C+C''.
\end{equation}
By AM--GM,
\begin{equation}\label{eq:matchingAMGM}
  C,C',C''\ge2\sqrt\Pi.
\end{equation}
Hence
\begin{align}
 U&=(C+C')^2-8\Pi
 =C^2+C'^2+2(CC'-4\Pi)
 \ge C^2+C'^2,\label{eq:Ulower}\\
 V&=(C+C'')^2-8\Pi
 \ge C^2+C''^2.\label{eq:Vlower}
\end{align}
Therefore
\begin{align}
 \sqrt{UV}
 &\ge\sqrt{(C^2+C'^2)(C^2+C''^2)}\notag\\
 &\ge C^2+C'C'',
\label{eq:sqrtUV}
\end{align}
because
\[
 (C^2+C'^2)(C^2+C''^2)-(C^2+C'C'')^2
 =C^2(C'-C'')^2\ge0.
\]
Consequently,
\begin{equation}\label{eq:sqrtUVWpre}
  \sqrt{UV}+W\ge2C^2+C'C''-8\Pi.
\end{equation}

We claim that
\begin{equation}\label{eq:keyquadscalar}
  2C^2+C'C''-8\Pi
  \ge \frac{4E_3C}{3s}.
\end{equation}
Two elementary identities are useful:
\begin{equation}\label{eq:sE3identity}
  sE_3=CC'+CC''+C'C''+4\Pi,
\end{equation}
and
\begin{equation}\label{eq:s2matching}
  s^2-2(C+C'+C'')=x_0^2+x_1^2+x_2^2+x_3^2\ge0.
\end{equation}
To prove \eqref{eq:keyquadscalar}, first suppose $\Pi>0$ and set
\[
 X:=\frac{C}{2\sqrt\Pi},\qquad
 Y:=\frac{C'}{2\sqrt\Pi},\qquad
 Z:=\frac{C''}{2\sqrt\Pi}.
\]
By \eqref{eq:matchingAMGM}, $X,Y,Z\ge1$.  From \eqref{eq:sE3identity} and \eqref{eq:s2matching},
\begin{align*}
 \frac{4E_3C}{3s}
 &=\frac{4C}{3s^2}
   (CC'+CC''+C'C''+4\Pi)\\
 &\le \frac{2C}{3(C+C'+C'')}
   (CC'+CC''+C'C''+4\Pi).
\end{align*}
Thus \eqref{eq:keyquadscalar} follows from
\begin{equation}\label{eq:XYZineq}
  3(X+Y+Z)(2X^2+YZ-2)
  \ge2X(XY+XZ+YZ+1).
\end{equation}
Write $X=1+p$, $Y=1+q$, $Z=1+r$, where $p,q,r\ge0$.  The difference between the two sides of \eqref{eq:XYZineq} expands exactly as
\begin{align}
&1+27p+8q+8r+26p^2+3q^2+3r^2
 +9pq+9pr+13qr\notag\\
&\quad+6p^3+4p^2q+4p^2r+pqr+3q^2r+3qr^2,
\label{eq:positivepoly}
\end{align}
which is strictly positive.  This proves \eqref{eq:keyquadscalar} for $\Pi>0$; the nonnegative boundary follows by continuity.

Combining \eqref{eq:sqrtUVWpre} and \eqref{eq:keyquadscalar},
\begin{equation}\label{eq:sqrtUVW}
  \sqrt{UV}+W\ge\frac{4E_3C}{3s}.
\end{equation}
Since $U,V\ge0$,
\[
 Uu^2+Vv^2\ge2\sqrt{UV}\,uv.
\]
Therefore
\begin{align}
2(Uu^2+2Wuv+Vv^2)
&\ge4(\sqrt{UV}+W)uv\notag\\
&\ge\frac{16E_3C}{3s}uv.
\label{eq:quadraticallocation}
\end{align}
This is exactly the degree-two part required by the right-hand side of \eqref{eq:quantremainder}.

\subsubsection*{Step 2: the cubic sector}
Let
\[
  S_1:=x_0+x_1,\qquad S_2:=x_2+x_3,
\]
so that $A=S_1S_2$ and $S_1+S_2=s$.  From \eqref{eq:B21},
\begin{align*}
 \frac{B_{21}}4
 &=S_2(x_0^2+x_1^2)+S_1(x_2^2+x_3^2)\\
 &\ge \frac12S_2S_1^2+\frac12S_1S_2^2
 =\frac12As.
\end{align*}
Thus
\begin{equation}\label{eq:B21As}
  B_{21}\ge2As.
\end{equation}
Also
\begin{equation}\label{eq:Cbound}
  C=x_0x_3+x_1x_2\le\frac{s^2}{4}.
\end{equation}
For example, if $\theta=x_0+x_3$, then $C\le \theta^2/4+(s-\theta)^2/4\le s^2/4$.  Hence
\begin{equation}\label{eq:B21target}
  B_{21}\ge2As\ge\frac{16CA}{3s}.
\end{equation}
By the analogous grouping of \eqref{eq:B12} with the partition $\{0,2\}|\{1,3\}$,
\begin{equation}\label{eq:B12target}
  B_{12}\ge2Bs\ge\frac{16CB}{3s}.
\end{equation}
Therefore
\begin{equation}\label{eq:cubicallocation}
  B_{21}u^2v+B_{12}uv^2
  \ge\frac{16C}{3s}(Au^2v+Buv^2).
\end{equation}

\subsubsection*{Step 3: total degree at least four}
Put
\[
  \tau:=uv,\qquad m:=u+v.
\]
The last line of \eqref{eq:CNremainder} is
\begin{equation}\label{eq:Rhigh}
  \mathcal R_{\ge4}=\tau^2(4m^2+8sm+6s^2).
\end{equation}
Besides \eqref{eq:Cbound}, we have
\begin{equation}\label{eq:ABe3bounds}
  A\le\frac{s^2}{4},\qquad
  B\le\frac{s^2}{4},\qquad
  E_3\le\frac{s^3}{16}.
\end{equation}
The first two are the elementary product bound $XY\le(X+Y)^2/4$.  The third is Maclaurin's inequality:
\[
 \frac{E_3}{\binom43}\le\left(\frac{s}{4}\right)^3,
 \qquad\text{hence }E_3\le\frac{s^3}{16}.
\]
Consequently,
\begin{align}
&\frac{16\tau^2}{3s}(sC+E_3+Au+Bv+s\tau)\notag\\
&\qquad\le \tau^2\left(\frac53s^2+\frac43sm+\frac{16}{3}\tau\right).
\label{eq:highRHSbound}
\end{align}
Since $m^2=(u+v)^2\ge4uv=4\tau$,
\begin{align*}
4m^2+8sm+6s^2
&-\left(\frac53s^2+\frac43sm+\frac{16}{3}\tau\right)\\
&=4m^2-\frac{16}{3}\tau+\frac{20}{3}sm+\frac{13}{3}s^2\\
&\ge\frac{32}{3}\tau+\frac{20}{3}sm+\frac{13}{3}s^2\ge0.
\end{align*}
Together with \eqref{eq:Rhigh} and \eqref{eq:highRHSbound},
\begin{equation}\label{eq:highallocation}
  \mathcal R_{\ge4}
  \ge\frac{16\tau^2}{3s}(sC+E_3+Au+Bv+s\tau).
\end{equation}

\subsubsection*{Step 4: recombination}
From \eqref{eq:D} and \eqref{eq:xmapping},
\[
 D(\omega)=E_3+Au+Bv+suv.
\]
Expanding the desired right-hand side gives
\begin{align}
\frac{16}{3s}D(\omega) uv(C+uv)
&=\frac{16E_3C}{3s}uv
 +\frac{16C}{3s}(Au^2v+Buv^2)\notag\\
&\quad+\frac{16u^2v^2}{3s}
 (sC+E_3+Au+Bv+suv).
\label{eq:RHSSplit}
\end{align}
The three displayed groups are bounded respectively by \eqref{eq:quadraticallocation}, \eqref{eq:cubicallocation}, and \eqref{eq:highallocation}.  All omitted terms of \eqref{eq:CNremainder} are nonnegative.  This proves \eqref{eq:quantremainder}.

If all six variables $x_0,x_1,x_2,x_3,u,v$ are positive, the polynomial in \eqref{eq:positivepoly} is strictly positive and $uv>0$, so the quadratic allocation is strict.  Hence \eqref{eq:quantremainder} is strict.
\end{proof}

\section{The determinant--tangency deficit inequality}
\label{sec:deficit}
In this section we show that the estimate \eqref{eq:Ntarget} holds at $\omega$ in the interior of the positive cone, with  $c_i(\omega)\ge w_i$, if exactly six constraints are active. This is the remaining terminal case needed for the proof.

We first convert the quantitative remainder \eqref{eq:quantremaindernew} into a comparison between the determinant deficit and the unique tangency slack.

\begin{proposition}\label{prop:deficit}
Assume $a,b,c,d,e,f>0$, $\det Q=1$, $c_i(\omega)\ge w_i$, and
\[
  c_i(\omega)=w_i\qquad(i\ne4).
\]
Then
\begin{equation}\label{eq:deficitcorr}
  D(\omega)-\frac1{\sqrt2}
  >\frac13q_4(\omega)(c_4(\omega)-w_4).
\end{equation} 
\end{proposition}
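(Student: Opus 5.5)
The plan is to derive \eqref{eq:deficitcorr} from the quantitative remainder estimate of Proposition~\ref{prop:quantremainder}, combined with an exact formula for $w_4$ in terms of the six active lengths. Since all weights are positive and $\det\Gamma(\ell^{\circ2})=1$ by \eqref{eq:detGamma1}, Proposition~\ref{prop:quantremainder} gives
\[
 2D(\omega)^2-1>\frac{16}{3c_4(\omega)}\,D(\omega)\,af(be+cd+af)>0 .
\]
In particular $D(\omega)>1/\sqrt2$, because $D(\omega)>0$. Factor the left-hand side as $2\bigl(D-\tfrac1{\sqrt2}\bigr)\bigl(D+\tfrac1{\sqrt2}\bigr)$ and use $D+\tfrac1{\sqrt2}<2D$. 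This yields
\[
 D(\omega)-\frac1{\sqrt2}>\frac{4\,af(be+cd+af)}{3\,c_4(\omega)}
 =\frac{q_4}{3}\cdot\frac{4(q_4+q_5+q_6)}{c_4}.
\]
It therefore suffices to prove $c_4(c_4-w_4)\le 4(q_4+q_5+q_6)$.

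The key step is an exact identity for $w_4^2$. Write $z_4=z_2+z_3=z_0+z_2-z_6$. Expand $\|z_0+z_2-z_6\|_Q^2$ by expressing the inner products through the squared distances of the four points $0,z_0,z_2,z_6$, exactly as in \eqref{eq:Gammagram}. This gives
\[
 w_4^2=w_0^2+w_1^2+w_2^2+w_3^2-w_5^2-w_6^2 .
\]
Next substitute the active tangency values \eqref{eq:identification}, namely $w_0=a+b+c$, $w_1=a+d+e$, and so on, and expand. The squares of $a$ and $f$ cancel, and a short computation gives
\[
 w_4^2=(b+c+d+e)^2-4(af+be+cd)=c_4^2-4(q_4+q_5+q_6).
\]
In particular $w_4<c_4$, which is consistent with $\delta_4=h>0$.

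To conclude, note that $c_4+w_4\ge c_4$ gives
\[
 c_4(c_4-w_4)\le (c_4+w_4)(c_4-w_4)=c_4^2-w_4^2=4(q_4+q_5+q_6).
\]
Inserting this into the displayed lower bound gives \eqref{eq:deficitcorr}. Strictness is inherited from the strict form of Proposition~\ref{prop:quantremainder} and from the strict inequality $D+1/\sqrt2<2D$.

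The only step I expect to need care is the algebraic verification of the identity for $w_4^2$. One must check that the parallelogram-type expansion of $\|z_0+z_2-z_6\|_Q^2$ has the signs stated above. One must also check that, after substituting \eqref{eq:identification}, all terms except $-4(af+be+cd)$ combine into $c_4^2$. Everything else is a direct chaining of Proposition~\ref{prop:quantremainder} with elementary inequalities.
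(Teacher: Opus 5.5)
Your proposal is correct and follows the paper's proof step for step. It uses the same specialization $2D^2-1$ of Proposition~\ref{prop:quantremainder}, the same factorization with $D+1/\sqrt2\le 2D$, the same identity $w_4^2=c_4^2-4(af+be+cd)$, and the same bound $c_4-w_4\le 4(q_4+q_5+q_6)/c_4$. The only cosmetic difference is that you get the $w_4^2$ identity by polarizing $\|z_0+z_2-z_6\|_Q^2$, whereas the paper contracts the rank-one identity \eqref{eq:z4matrixidentity} with $Q$; the two are equivalent, and your sign check of the polarization is right.
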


\begin{proof}
By \eqref{eq:detGamma1}, the homogeneous remainder of Proposition~\ref{prop:quantremainder} specializes in the present geometric configuration to
\[
 \mathcal R=2D(\omega)^2-1,
\]
with $\mathcal R$ as in \eqref{deltaR}.  Since $a,b,c,d,e,f>0$, the right-hand side of \eqref{eq:quantremaindernew} is positive, so $\mathcal R>0$; this uses only Proposition~\ref{prop:quantremainder}.  In particular $D(\omega)>1/\sqrt2$ and hence $D(\omega)+1/\sqrt2\le2D(\omega)$.  Therefore
\begin{equation}\label{eq:factorbound}
 D(\omega)-\frac1{\sqrt2}
 =\frac{\mathcal R}{2(D(\omega)+1/\sqrt2)}
 \ge\frac{\mathcal R}{4D(\omega)}.
\end{equation}
Proposition~\ref{prop:quantremainder} now gives
\begin{equation}\label{eq:deficitintermediate}
 D(\omega)-\frac1{\sqrt2}
 \ge \frac{\mathcal R}{4D(\omega)}
 > \frac{4af(be+cd+af)}{3(b+c+d+e)},
\end{equation}
where strictness follows from $a,b,c,d,e,f>0$.

On the other hand, the explicit cut normals \eqref{eq:z} satisfy the matrix identity
\begin{equation}\label{eq:z4matrixidentity}
 z_4z_4^T=\sum_{i=0}^3 z_i z_i^T-z_5z_5^T-z_6z_6^T.
\end{equation}
Contracting \eqref{eq:z4matrixidentity} with $Q$ yields
\[
 w_4^2=w_0^2+w_1^2+w_2^2+w_3^2-w_5^2-w_6^2.
\]
Since $w_i=c_i(\omega)$ for $i\ne4$, we obtain
\[
  w_4^2=c_0(\omega)^2+c_1(\omega)^2+c_2(\omega)^2+c_3(\omega)^2-c_5(\omega)^2-c_6(\omega)^2.
\]
Substituting the expressions in \eqref{eq:ci} gives
\[w_4^2=(b+c+d+e)^2-4(be+cd+af).\]
Since \eqref{eq:ci} gives $c_4(\omega)=b+c+d+e$, we deduce
\[
 c_4(\omega)-w_4=(b+c+d+e)-w_4=\frac{(b+c+d+e)^2-w_4^2}{(b+c+d+e)+w_4}\leq \frac{4(be+cd+af)}{b+c+d+e}.
\]
Since $q_4(\omega)=af$ by \eqref{eq:qi}, we conclude 
\[
 \frac13q_4(\omega)(c_4(\omega)-w_4)
 \le\frac{4af(be+cd+af)}{3(b+c+d+e)}.
\]
Together with \eqref{eq:deficitintermediate} this proves \eqref{eq:deficitcorr}.
\end{proof}

\begin{corollary}\label{cor:sixcontact}
Fix $Q>0$ with $\det Q=1$, and let $a,b,c,d,e,f>0$.  Under the normalization $c_i(\omega)\ge w_i$, if exactly six constraints are active, then
\[
  N(\omega,Q)>\frac3{\sqrt2}.
\]
\end{corollary}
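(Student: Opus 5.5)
The plan is to combine the Euler identity \eqref{eq:EulerIdentity} with the determinant--tangency deficit inequality of Proposition~\ref{prop:deficit}. First I would reduce to the normal form of the six-contact configuration. By Lemma~\ref{lem:slackdouble}, if exactly six of the constraints $c_i(\omega)\ge w_i$ are equalities, then the unique inactive constraint is one of the three $2+2$ cuts. After an $S_4$ relabelling (Remark~\ref{rem:S4}), we may therefore assume \eqref{eq:activeexcept4}, that is, $c_i(\omega)=w_i$ for $i\ne4$ and $c_4(\omega)>w_4$. This relabelling preserves $\det Q=1$, the positivity of the six weights, and the value of $N(\omega,Q)$, since $c_i,q_i,w_i$ are permuted simultaneously.

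Next I would rewrite $N$ using the tangency equalities. Since $w_i=c_i(\omega)$ for every $i\ne4$,
\[
 N(\omega,Q)=\sum_{i=0}^6 q_i(\omega)w_i=\sum_{i=0}^6 q_i(\omega)c_i(\omega)-q_4(\omega)\bigl(c_4(\omega)-w_4\bigr).
\]
The matrix-tree identity \eqref{eq:EulerIdentity} turns the first sum into $3D(\omega)$, which gives
\[
 N(\omega,Q)=3D(\omega)-q_4(\omega)\bigl(c_4(\omega)-w_4\bigr).
\]

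Finally I would invoke Proposition~\ref{prop:deficit}, whose hypotheses are exactly the present ones. It gives $3D(\omega)-3/\sqrt2>q_4(\omega)(c_4(\omega)-w_4)$. Substituting this into the last display yields $N(\omega,Q)>3/\sqrt2$, with strict inequality.

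None of these steps is a real obstacle, because the analytic difficulty has already been handled in Propositions~\ref{prop:quantremainder} and~\ref{prop:deficit}. The only point that needs care is the relabelling step: one should check that the $S_4$ action sends $N(\omega,Q)$ to $N$ of the relabelled data. This holds because $N=\sum q_iw_i$ is a sum over all seven cuts, and the relabelling permutes these cuts.
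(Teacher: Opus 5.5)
Your proposal is correct and is essentially the paper's own proof: reduce via Lemma~\ref{lem:slackdouble} to the configuration \eqref{eq:activeexcept4}, use \eqref{eq:EulerIdentity} to write $N(\omega,Q)=3D(\omega)-q_4(\omega)\bigl(c_4(\omega)-w_4\bigr)$, and conclude with Proposition~\ref{prop:deficit}. Your check that the $S_4$ relabelling preserves $N(\omega,Q)$ and $\det Q=1$ is a detail the paper leaves implicit in Remark~\ref{rem:S4}.
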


\begin{proof}
By Lemma~\ref{lem:slackdouble}, after relabelling the unique slack is $\delta_4=c_4(\omega)-w_4>0$.  Hence
\[
 \sum_{i=0}^6q_i(\omega)(c_i(\omega)-w_i)=q_4(c_4(\omega)-w_4).
\]
Using \eqref{eq:EulerIdentity} and Proposition~\ref{prop:deficit},
\begin{align*}
 N(\omega)
 &=\sum_iq_i(\omega)w_i
 =\sum_iq_i(\omega)c_i(\omega)-\sum_iq_i(\omega)(c_i(\omega)-w_i)\\
 &=3D(\omega)-q_4(\omega)(c_4(\omega)-w_4)\\
 &> 3D(\omega)-3\left(D(\omega)-\frac1{\sqrt2}\right)
 =\frac3{\sqrt2}.
\end{align*}
\end{proof}

\begin{corollary}\label{cor:interiorstrict}
Fix $Q>0$ with $\det Q=1$.  If $a,b,c,d,e,f>0$ and $c_i(\omega)\ge w_i$ for all $i$, then
\[
 N(\omega,Q)>\frac3{\sqrt2}.
\]
\end{corollary}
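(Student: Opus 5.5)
The argument combines the negative descent of Proposition~\ref{prop:descent} with the two terminal estimates already available. One ingredient, Corollary~\ref{cor:boundaryfull}, treats descents that end on the boundary of the positive cone. The other, Corollary~\ref{cor:sixcontact}, treats descents that end at interior points with exactly six active contacts. The only subtlety is strictness, since Corollary~\ref{cor:boundaryfull} gives only a non-strict bound.

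Fix $Q$ and $\omega$ with all six weights positive and $c_i(\omega)\ge w_i$ for all $i$. By Proposition~\ref{prop:descent}, not all seven constraints can be active, so at most six are active. We split into two cases.

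\emph{Case 1: exactly six constraints are active at $\omega$.} Corollary~\ref{cor:sixcontact} applies directly to $\omega$ and gives $N(\omega,Q)>3/\sqrt2$.

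\emph{Case 2: at most five constraints are active at $\omega$.} Proposition~\ref{prop:descent} produces a feasible $\omega^*$ with $N(\omega^*,Q)<N(\omega,Q)$, strictly. Moreover, either some graphical weight of $\omega^*$ vanishes, or exactly six constraints are active at $\omega^*$.

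\begin{itemize}
\item If some weight of $\omega^*$ vanishes, Corollary~\ref{cor:boundaryfull} gives $N(\omega^*,Q)\ge3/\sqrt2$. The strict descent then yields $N(\omega,Q)>3/\sqrt2$.
\item Otherwise all weights of $\omega^*$ are positive and exactly six constraints are active there. Corollary~\ref{cor:sixcontact} gives $N(\omega^*,Q)>3/\sqrt2$, and hence $N(\omega,Q)>N(\omega^*,Q)>3/\sqrt2$.
\end{itemize}

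The main point to check is that the strict inequality in Proposition~\ref{prop:descent} is genuinely available exactly when descent is needed. This is what compensates for the non-strict boundary bound. The proposition guarantees strictness whenever at most five constraints are active at $\omega$, which is precisely Case 2, so nothing further is required.

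A second point is that Corollary~\ref{cor:sixcontact} is stated for fixed $Q$ and uses a relabelling via Lemma~\ref{lem:slackdouble}. By Remark~\ref{rem:S4}, this relabelling preserves $N$, feasibility and $\det Q=1$, so it can be applied to $\omega^*$ without issue.
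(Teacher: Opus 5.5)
Your proposal is correct and follows essentially the same route as the paper's proof: split on whether exactly six constraints are active at $\omega$ (apply Corollary~\ref{cor:sixcontact} directly) or at most five (apply the strict descent of Proposition~\ref{prop:descent}, then close with Corollary~\ref{cor:boundaryfull} or Corollary~\ref{cor:sixcontact} at $\omega^*$). Your remark on how strict descent compensates for the non-strict boundary bound is exactly how the paper obtains strictness.
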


\begin{proof}
If exactly six cut constraints are active at $\omega$, this is Corollary~\ref{cor:sixcontact}.  Otherwise at most five constraints are active, and Proposition~\ref{prop:descent} produces a feasible $\omega^*$ with
\[
 N(\omega^*,Q)<N(\omega,Q)
\]
such that either exactly six constraints are active at $\omega^*$, or some graphical weight of $\omega^*$ vanishes.  In the first case Corollary~\ref{cor:sixcontact} gives $N(\omega^*,Q)>3/\sqrt2$, in the second Corollary~\ref{cor:boundaryfull} gives $N(\omega^*,Q)\ge3/\sqrt2$.  In either case
\[
 N(\omega,Q)>N(\omega^*,Q)\ge\frac3{\sqrt2}. \qedhere
\]
\end{proof}

\section{Proof of the main result}
\label{sec:mainproof}

\begin{proof}[Proof of Theorem~\ref{thm:main}]
The non-truncated types are covered by Proposition~\ref{prop:boundaryRDC}.  We may therefore assume that $P$ has truncated-octahedral Fedorov type.  In the graphical representation all six weights are then positive; equivalently, the associated cell has fourteen facets, as in \cite[Proposition~3.3]{CesaroniNovaga2026}.  Concretely, if any graphical weight vanished, one of $q_4=af$, $q_5=be$, or $q_6=cd$ would vanish by \eqref{eq:qi}, so at least one of the seven opposite facet pairs would disappear.

Since the quotient $\Area(P)/r(P)^2$ is invariant under similarities, we may replace $P$ by a similar representative and write $P=Q^{-1/2}Z_\omega$ with $\det Q=1$.  Using the rescaling invariance of $\mu^2N$ noted above, rescale $\omega$ so that $\mu=1$.  By Lemma~\ref{lem:inradius},
\[
 c_i(\omega)\ge w_i\qquad(0\le i\le6),
\]
and the normalized body has inradius $1/2$.  By Lemma~\ref{lem:surface}, its surface area is $2N(\omega,Q)$.  Corollary~\ref{cor:interiorstrict} therefore yields
\[
 N(\omega,Q)>\frac3{\sqrt2},
\]
and hence
\[
 \frac{\Area(P)}{r(P)^2}=8N(\omega,Q)>12\sqrt2
\]
for every truncated-octahedral parallelohedron.  Since $\mu^2N$ is invariant under rescaling $\omega$, this is the original, unnormalized quotient in \eqref{eq:ratioMuN}.

It remains to classify equality.  The preceding paragraph excludes equality throughout the truncated-octahedral stratum.  Proposition~\ref{prop:boundaryRDC} shows that among the four non-truncated Fedorov types equality occurs if and only if $P$ is similar to the regular rhombic dodecahedron.  This proves both \eqref{eq:mainhom} and its equality statement.
\end{proof}

\section*{Acknowledgements}
The authors are members of the Gruppo Nazionale per l'Analisi Matematica, la Probabilità e le loro Applicazioni (GNAMPA) of the Istituto Nazionale di Alta Matematica (INdAM).

\end{document}